\theoremstyle{plain}
\newtheorem{Thm}{Theorem}[section]
\newtheorem{Pro}{Proposition}[section]
\newtheorem{Cor}{Corollary}[section]
\newtheorem{Rem}{Remark}[section]
\newtheorem{Lem}[Thm]{Lemma} \theoremstyle{definition}
\newtheorem{Dfn}{Definition}
\newcommand{\ccsm}[2]{{}}
\newcommand{\crich}[2]{{}}
\newcommand{\lrb}[2]{{\langle #1 , #2 \rangle}}
\newcommand{\nc}{\newcommand}
\nc{\rnc}{\renewcommand}
\nc{\bb}[1]{{\mathbb #1}}
\nc{\bbA}{\bb{A}}\nc{\bbB}{\bb{B}}\nc{\bbC}{\bb{C}}\nc{\bbD}{\bb{D}}
\nc{\bbE}{\bb{E}}\nc{\bbF}{\bb{F}}\nc{\bbG}{\bb{G}}\nc{\bbH}{\bb{H}}
\nc{\bbI}{\bb{I}}\nc{\bbJ}{\bb{J}}\nc{\bbK}{\bb{K}}\nc{\bbL}{\bb{L}}
\nc{\bbM}{\bb{M}}\nc{\bbN}{\bb{N}}\nc{\bbO}{\bb{O}}\nc{\bbP}{\bb{P}}
\nc{\bbQ}{\bb{Q}}\nc{\bbR}{\bb{R}}\nc{\bbS}{\bb{S}}\nc{\bbT}{\bb{T}}
\nc{\bbU}{\bb{U}}\nc{\bbV}{\bb{V}}\nc{\bbW}{\bb{W}}\nc{\bbX}{\bb{X}}
\nc{\bbY}{\bb{Y}}\nc{\bbZ}{\bb{Z}}
\nc{\mbf}[1]{{\mathbf #1}}
\nc{\bfA}{\mbf{A}}\nc{\bfB}{\mbf{B}}\nc{\bfC}{\mbf{C}}\nc{\bfD}{\mbf{D}}
\nc{\bfE}{\mbf{E}}\nc{\bfF}{\mbf{F}}\nc{\bfG}{\mbf{G}}\nc{\bfH}{\mbf{H}}
\nc{\bfI}{\mbf{I}}\nc{\bfJ}{\mbf{J}}\nc{\bfK}{\mbf{K}}\nc{\bfL}{\mbf{L}}
\nc{\bfM}{\mbf{M}}\nc{\bfN}{\mbf{N}}\nc{\bfO}{\mbf{O}}\nc{\bfP}{\mbf{P}}
\nc{\bfQ}{\mbf{Q}}\nc{\bfR}{\mbf{R}}\nc{\bfS}{\mbf{S}}\nc{\bfT}{\mbf{T}}
\nc{\bfU}{\mbf{U}}\nc{\bfV}{\mbf{V}}\nc{\bfW}{\mbf{W}}\nc{\bfX}{\mbf{X}}
\nc{\bfY}{\mbf{Y}}\nc{\bfZ}{\mbf{Z}}
\nc{\bfa}{\mbf{a}}\nc{\bfb}{\mbf{b}}\nc{\bfc}{\mbf{c}}\nc{\bfd}{\mbf{d}}
\nc{\bfe}{\mbf{e}}\nc{\bff}{\mbf{f}}\nc{\bfg}{\mbf{g}}\nc{\bfh}{\mbf{h}}
\nc{\bfi}{\mbf{i}}\nc{\bfj}{\mbf{j}}\nc{\bfk}{\mbf{k}}\nc{\bfl}{\mbf{l}}
\nc{\bfm}{\mbf{m}}\nc{\bfn}{\mbf{n}}\nc{\bfo}{\mbf{o}}\nc{\bfp}{\mbf{p}}
\nc{\bfq}{\mbf{q}}\nc{\bfr}{\mbf{r}}\nc{\bfs}{\mbf{s}}\nc{\bft}{\mbf{t}}
\nc{\bfu}{\mbf{u}}\nc{\bfv}{\mbf{v}}\nc{\bfw}{\mbf{w}}\nc{\bfx}{\mbf{x}}
\nc{\bfy}{\mbf{y}}\nc{\bfz}{\mbf{z}}
\nc{\mcal}[1]{{\mathcal #1}}
\nc{\calA}{\mcal{A}}\nc{\calB}{\mcal{B}}\nc{\calC}{\mcal{C}}\nc{\calD}{\mcal{D}}
\nc{\calE}{\mcal{E}} \nc{\calF}{\mcal{F}}\nc{\calG}{\mcal{G}}\nc{\calH}{\mcal{H}}
\nc{\calI}{\mcal{I}}\nc{\calJ}{\mcal{J}}\nc{\calK}{\mcal{K}}\nc{\calL}{\mcal{L}}
\nc{\calM}{\mcal{M}}\nc{\calN}{\mcal{N}}\nc{\calO}{\mcal{O}}\nc{\calP}{\mcal{P}}
\nc{\calQ}{\mcal{Q}}\nc{\calR}{\mcal{R}}\nc{\calS}{\mcal{S}}\nc{\calT}{\mcal{T}}
\nc{\calU}{\mcal{U}}\nc{\calV}{\mcal{V}}\nc{\calW}{\mcal{W}}\nc{\calX}{\mcal{X}}
\nc{\calY}{\mcal{Y}}\nc{\calZ}{\mcal{Z}}
\nc{\fA}{\frak{A}}\nc{\fB}{\frak{B}}\nc{\fC}{\frak{C}} \nc{\fD}{\frak{D}}
\nc{\fE}{\frak{E}}\nc{\fF}{\frak{F}}\nc{\fG}{\frak{G}}\nc{\fH}{\frak{H}}
\nc{\fI}{\frak{I}}\nc{\fJ}{\frak{J}}\nc{\fK}{\frak{K}}\nc{\fL}{\frak{L}}
\nc{\fM}{\frak{M}}\nc{\fN}{\frak{N}}\nc{\fO}{\frak{O}}\nc{\fP}{\frak{P}}
\nc{\fQ}{\frak{Q}}\nc{\fR}{\frak{R}}\nc{\fS}{\frak{S}}\nc{\fT}{\frak{T}}
\nc{\fU}{\frak{U}}\nc{\fV}{\frak{V}}\nc{\fW}{\frak{W}}\nc{\fX}{\frak{X}}
\nc{\fY}{\frak{Y}}\nc{\fZ}{\frak{Z}}
\nc{\fa}{\frak{a}}\nc{\fb}{\frak{b}}\nc{\fc}{\frak{c}} \nc{\fd}{\frak{d}}
\nc{\fe}{\frak{e}}\nc{\fFf}{\frak{f}}\nc{\fg}{\frak{g}}\nc{\fh}{\frak{h}}
\nc{\fri}{\frak{i}}\nc{\fj}{\frak{j}}\nc{\fk}{\frak{k}}\nc{\fl}{\frak{l}}
\nc{\fm}{\frak{m}}\nc{\fn}{\frak{n}}\nc{\fo}{\frak{o}}\nc{\fp}{\frak{p}}
\nc{\fq}{\frak{q}}\nc{\fr}{\frak{r}}\nc{\fs}{\frak{s}}\nc{\ft}{\frak{t}}
\nc{\fu}{\frak{u}}\nc{\fv}{\frak{v}}\nc{\fw}{\frak{w}}\nc{\fx}{\frak{x}}
\nc{\fy}{\frak{y}}\nc{\fz}{\frak{z}}
\newcommand{\bR}{{\mathbb R}}
\newcommand{\cLL}{\mathcal{L}}
\DeclareMathOperator{\Fun}{Fun}
\begin{document}

\title[Hook formula for Coxeter groups]{Hook formula for Coxeter groups via
the twisted group ring}

\author{Leonardo C.~Mihalcea}
\address{
Department of Mathematics, 
Virginia Tech University, 
Blacksburg, VA 24061
USA
}
\email{lmihalce@vt.edu}

\author{Hiroshi Naruse}
\address{Graduate School of Education, University of Yamanashi, 
Kofu, 400-8510, Japan}
\email{hnaruse@yamanashi.ac.jp}

\author{Changjian Su}
\address{Yau Mathematical Sciences Center, Tsinghua University, Beijing, China}
\email{changjiansu@mail.tsinghua.edu.cn}

\maketitle

\begin{abstract}
We use Kostant and Kumar's twisted group ring and its dual to formulate 
and prove a generalization of Nakada's colored hook formula for any Coxeter groups. For dominant minuscule elements of the Weyl group of a Kac--Moody algebra, this provides another short proof of Nakada's colored hook formula.
\end{abstract}

\section{Introduction}

The purpose of this note is to give a short algebraic proof of Nakada's colored hook formula 
(Corollary \ref{col:Nak})
and its generalization
to any Coxeter groups (Theorem \ref{thm:hookJ}).
As a by-product of our formulation, we get a simple proof of
Shi's Yang--Baxter relations  \cite{Shi} in the group algebra
$Q(V)[W]$ for a Coxeter group $W$ (see Remark \ref{Shi_simplified} below), where $V$ is the underlying vector space of the root datum of $W$, see Section \ref{sec:Cexeter}.

For the proof, we use Kostant--Kumar's twisted group ring $H_Q$ (see \ref{H_Q}), 
$\cLL_w\in H_Q$ (cf. Definition\ref{def:L_w}), the dual basis $\eta^w$ (cf. Equation \ref{duality}), 
and a Molev--Sagan recursion formula (\ref{eq:recLR})
(cf. \cite[Prop. 3.2]{MS99}) .
The $\cLL_w$ and $\eta^w$
 are algebraic counterparts of some
geometric objects studied in \cite{MNS22}.
In Appendix \ref{sec:app}, we give a brief explanation of the geometric background of this construction in the finite Weyl group case.

\section{Coxeter group and root system}\label{Cox:root}

In this section, we recall some fundamental properties of the Coxeter group and
its root system. 

\subsection{Coxeter group and root system}\label{sec:Cexeter}

Let $(W,S)$ be a Coxeter system, where $S=\{s_i\}_{i\in I}$ is the set of generators. For any $w\in W$, the support of $w$ (the set of generators in $S$ which appear in some reduced expression of $w$) is a finite set. Hence, for the purpose of generalizing the hook formula (Theorem \ref{thm:hookJ}), we can assume $I:=\{1,2,\ldots, r\}$ is a finite set.
Let $(V,\Sigma,\Sigma^\vee)$ be a triple (called the root datum of $W$)
with the following properties:
\begin{itemize}
\item[($R_0$)] $V$ is a finite-dimensional vector space over $\mathbb R$ which is
 a representation space of $W$.
\noindent
Let {$V^{*}=\nobreak {\rm Hom}_{\mathbb R} (V, \mathbb R)$}, with
 the natural pairing 
 
 $( \cdot,\cdot ) : V\times V^{*}\to \mathbb R$.
\item[($R_1$)] $\Sigma=\{\alpha_1,\ldots,\alpha_r\}\subset V$ and
$\Sigma^\vee=\{\alpha_1^\vee,\ldots,\alpha_r^\vee \}\subset 
V^{*}$. The elements in $\Sigma$ and
$\Sigma^\vee$ are assumed to be linearly independent.

\item[($R_2$)] $( \alpha_i,\alpha_i^\vee)=2$ for $i=1,2,\ldots, r$.

\item[($R_3$)]  $W$ acts on $V$  by 
$s_i \lambda=\lambda-( \lambda,\alpha_i^\vee) \alpha_i$ 
for $\lambda\in V$ and  $i=1,2,\ldots, r$.

\end{itemize}

\noindent
Note that such a triple $(V,\Sigma,\Sigma^\vee)$ always exists.
For example, for a Coxeter system $(W,S)$,
 a ``Cartan'' matrix $C=(c_{i,j})_{r\times r}$ whose entry is presumed to be  
$c_{i,j}=(\alpha_j,\alpha_i^\vee)$, can defined as follows.
Let $M=(m_{i,j})_{r\times r}$ be the Coxeter matrix of $(W,S)$
whose entry $m_{i,j}$ is the order of $s_i s_j$.
By definition, $m_{i,j}=m_{j,i}\in \{1,2,3,,\ldots\}\cup\{\infty\}$ and  $m_{i,i}=1$ for 
$1\leq i,j\leq r$.
We define a ``Cartan'' matrix $C=(c_{i,j})_{r\times r}$ whose entry $c_{i,j}$ is a real number satisfying the
following conditions.

\begin{itemize}
\item[(1)] $c_{i,i}=2$ ($1\leq i \leq r$).

\item[(2)] if $i\neq j$ then  $c_{i,j}\leq 0$, 
and $c_{i,j}=0$ if $m_{i,j}=2$. 

\item[(3)] for $m_{i,j}>2$, it is required that \\
 $c_{i,j} c_{j,i}=4\cos\left(\displaystyle\frac{\pi}{m_{i,j}} \right)^2$.
\end{itemize}

\noindent
For a crystallographic Coxeter group $W$ 
($m_{i,j}\in \{1, 2,3,4,6,\infty\}  \text{ for all }1\leq  i,j \leq r$), 
such as the Weyl group of a Kac--Moody algebra,
we can take all $c_{i,j}$ to be integers.
For a standard choice of an arbitrary Coxeter group, we can take $C$ to be symmetric ($c_{i,j}=c_{j,i}=
-2\cos(\pi/m_{i,j})$). Then using the argument of \cite[Proposition 1.1]{Kac} over the real numbers,
 we have such a
triple $(V,\Sigma,\Sigma^\vee)$ for $(W,S)$.

\vspace{0.1cm}

Let $R=\{w(\alpha_i)\mid w\in W, i=1,\ldots,r\}\subset V$ be the set roots of $(W,S)$.
It is known that we have a disjoint union
$R=R^{+} \sqcup R^{-}$, where 
$R^{+}=\left\{\alpha\in R \mid
\alpha=
\sum_{i=1}^{r} c_i \alpha_i , c_i\geq 0\right\}$ 
is the set of positive roots and $R^{-}=-R^{+}$
(cf. \cite{Deo} for basic properties of the root system of a Coxeter group.).
The action of $W$ on $V^{*}$ is defined by
$$s_i(y)=y-( \alpha_i, y ) \alpha_i^\vee \text{ for } y\in V^{*} 
\text{ for } i=1,2,\ldots ,r.$$
For a root $\gamma \in R$, the dual root $\gamma^\vee\in V^{*}$ is defined by $\gamma^\vee=w(\alpha_i^\vee)$
if $\gamma=w(\alpha_i)$.
For a positive root $\beta\in R^{+}$,  let $s_\beta \in W$ be the
corresponding reflection, i.e.
$$s_\beta(x)=x-(x, \beta^\vee) \beta \text{ for } x\in V,$$
$$s_\beta(y)=y-( \beta, y ) \beta^\vee \text{ for } y\in V^{*}.$$
\noindent
The pairing $( \cdot,\cdot )$ is $W$-invariant, i.e.
$$( w (x), w(y) )=( x,y ) \text{ for all } x\in V, y\in V^{*}, w\in W.$$

\subsection{Some lemmas} 
In this section, we list some results about the Coxeter group and the root system. For any $w\in W$, let $S(w):=R^{+} \cap w R^{-}$.
\begin{Lem}\label{beta}
For  a reduced expression $w=s_{i_1} s_{i_2}\cdots s_{i_\ell}$,
let 
\begin{equation}\label{beta_j}
\beta_j:=s_{i_1}\cdots s_{i_{j-1}} \alpha_{i_j} \;\; (1\leq j\leq \ell).
\end{equation}
Then
$ |S(w)|=\ell \text{ and }
S(w)=\{ \beta_1,\beta_2,\ldots, \beta_{\ell}\}.$
\end{Lem}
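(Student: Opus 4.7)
The plan is to induct on $\ell=\ell(w)$. The base case $\ell=0$ is trivial since $S(e)=\emptyset$. For the inductive step, I factor $w=w' s_{i_\ell}$ with $w'=s_{i_1}\cdots s_{i_{\ell-1}}$, which is reduced of length $\ell-1$ because $w$ is.

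Two well-known facts about Coxeter root systems (cf.\ \cite{Deo}) will do the work. \textbf{Fact A:} Each simple reflection $s_i$ sends $\alpha_i$ to $-\alpha_i$ and permutes $R^+\setminus\{\alpha_i\}$; equivalently, $s_i$ preserves the sign of every root other than $\pm\alpha_i$. \textbf{Fact B:} For any $u\in W$ and $i\in I$, the root $u\alpha_i$ is positive if and only if $\ell(us_i)>\ell(u)$.

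Using these, the main step is to establish the disjoint decomposition
\[ S(w)=S(w')\sqcup\{w'\alpha_{i_\ell}\}. \]
Fact B applied to $u=w'$ (using $\ell(w's_{i_\ell})=\ell(w')+1$) gives $w'\alpha_{i_\ell}\in R^+$. Then $w'\alpha_{i_\ell}\in S(w)$ since $w^{-1}(w'\alpha_{i_\ell})=s_{i_\ell}\alpha_{i_\ell}=-\alpha_{i_\ell}<0$, while $(w')^{-1}(w'\alpha_{i_\ell})=\alpha_{i_\ell}>0$ shows $w'\alpha_{i_\ell}\notin S(w')$. For any other $\alpha\in R^+$, set $\gamma:=(w')^{-1}\alpha$. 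The case $\gamma=-\alpha_{i_\ell}$ would force $\alpha=-w'\alpha_{i_\ell}\in R^-$, contradicting $\alpha>0$, and the case $\gamma=\alpha_{i_\ell}$ is excluded because $\alpha\neq w'\alpha_{i_\ell}$. Thus $\gamma\neq\pm\alpha_{i_\ell}$, and Fact A implies $s_{i_\ell}\gamma$ has the same sign as $\gamma$; hence $w^{-1}\alpha=s_{i_\ell}\gamma$ and $(w')^{-1}\alpha=\gamma$ have the same sign, so $\alpha\in S(w)\Leftrightarrow\alpha\in S(w')$.

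By induction, $S(w')=\{\beta_1,\ldots,\beta_{\ell-1}\}$ with $|S(w')|=\ell-1$. Since $w'\alpha_{i_\ell}=\beta_\ell$ by the definition in \eqref{beta_j} and the union above is disjoint, this yields $S(w)=\{\beta_1,\ldots,\beta_\ell\}$ with $|S(w)|=\ell$. The only genuine input is Fact A, whose proof in full Coxeter generality uses the dichotomy $R=R^+\sqcup R^-$ together with the observation that any root other than $\pm\alpha_i$ has a strictly positive coefficient on some $\alpha_j$ with $j\neq i$ when expanded in $\Sigma$; this is standard and presents no real obstacle beyond careful bookkeeping.
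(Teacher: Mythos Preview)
Your proof is correct. The paper's own proof is a one-line citation to \cite[Theorem~5.4]{Hum} (which is precisely your Fact~B), so you have simply spelled out the standard inductive argument that the paper leaves to that reference; the approaches are essentially the same.
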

\begin{proof}
This is a consequence of \cite[Theorem 5.4]{Hum},
i.e., for $u\in W$, $\ell(u s_i)>\ell(u)\iff u(\alpha_i)>0$.
\end{proof}

\begin{Rem}\label{rem:faithful}
By this Lemma, we see that the representation of $W$ on $V$ is faithful.
\end{Rem}

 Let $J\subset S$ be a subset of generators of $W$ and $W_J$ be the subgroup generated by 
 $\{s_i\}_{i\in J}$.
Let $W^J:=W/W_J$ be the set of minimal length coset representatives.
Let us denote by $<$ the Bruhat order on $W$ (\cite[5.9]{Hum}). 
Then it induces a
Bruhat order on $W^J$.
An element $\chi\in V$ is said to be dominant if
$(\chi, \alpha_j^\vee)\geq 0$ for any $1\leq j \leq r$.

\begin{Lem}\label{lem:non-zero}
\cite[Ch.5, \S 4.6]{Bur}
If $\chi\in V$ is dominant,
the stabilizer subgroup 
$stab_{W}(\chi) :=\{w\in W\mid w(\chi)=\chi\}$ is equal to $W_J$ for some 
$J\subset S$, 
i.e.,  for $x\neq y\in W^J$, 
$x(\chi)-y(\chi)\neq 0$.
\end{Lem}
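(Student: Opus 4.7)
The plan is to pin down the stabilizer explicitly by setting
$$J := \{i \in I : (\chi,\alpha_i^\vee) = 0\}.$$
The containment $W_J \subseteq \mathrm{stab}_W(\chi)$ is immediate from the reflection formula in $(R_3)$: each $s_i$ with $i \in J$ fixes $\chi$, so the subgroup they generate does as well. The content of the lemma is the reverse inclusion $\mathrm{stab}_W(\chi) \subseteq W_J$.

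For the reverse inclusion, I would fix a reduced expression $w = s_{i_1}\cdots s_{i_\ell}$ and expand $\chi - w(\chi)$ as a telescoping sum via $(R_3)$. Writing $w_j := s_{i_1}\cdots s_{i_j}$ and pushing the action of $w_{j-1}$ inside each term (using $W$-invariance of the pairing), a short calculation yields
$$\chi - w(\chi) \;=\; \sum_{j=1}^{\ell}\bigl(w_{j-1}(\chi) - w_j(\chi)\bigr) \;=\; \sum_{j=1}^{\ell} (\chi,\alpha_{i_j}^\vee)\,\beta_j,$$
where $\beta_j = w_{j-1}(\alpha_{i_j})$ is the positive root from Lemma \ref{beta}. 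If $w(\chi) = \chi$, this sum vanishes.

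The key step is then a positivity argument: dominance of $\chi$ forces $(\chi,\alpha_{i_j}^\vee) \geq 0$ for every $j$, while each $\beta_j \in R^+$ has a non-negative, nonzero expansion in the simple roots $\Sigma$. Because $\Sigma$ is linearly independent by $(R_1)$, a non-negative combination of positive roots can vanish only when every coefficient is zero; hence $(\chi,\alpha_{i_j}^\vee) = 0$ for all $j$, so $i_j \in J$, and therefore $w \in W_J$.

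The second assertion falls out immediately: if $x(\chi) = y(\chi)$ for $x,y \in W^J$, then $y^{-1}x \in \mathrm{stab}_W(\chi) = W_J$, so $xW_J = yW_J$, and minimality of the coset representatives forces $x = y$. I do not anticipate a serious obstacle here; the only step requiring care is the positivity argument, which depends essentially on the linear independence of $\Sigma$ built into $(R_1)$ rather than on any deeper property of the root system.
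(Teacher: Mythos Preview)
Your argument is correct. The paper does not supply its own proof of this lemma; it simply cites Bourbaki \cite[Ch.~5, \S 4.6]{Bur} and moves on. What you have written is essentially the standard proof one finds there (or in Humphreys \cite[\S 1.12, \S 5.13]{Hum}): define $J$ by the vanishing of $(\chi,\alpha_i^\vee)$, telescope $\chi - w(\chi)$ along a reduced word, and use dominance together with the positivity of the $\beta_j$ and the linear independence of $\Sigma$ to force every coefficient to zero.

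Two minor comments. First, your appeal to ``$W$-invariance of the pairing'' is not actually needed for the telescoping step: from $(R_3)$ one gets directly
\[
w_{j-1}(\chi) - w_j(\chi) \;=\; w_{j-1}\bigl(\chi - s_{i_j}(\chi)\bigr) \;=\; (\chi,\alpha_{i_j}^\vee)\,w_{j-1}(\alpha_{i_j}) \;=\; (\chi,\alpha_{i_j}^\vee)\,\beta_j,
\]
using only linearity of the $W$-action. Second, in the positivity step you should perhaps state explicitly that each $\beta_j$ is nonzero (indeed a positive root by Lemma~\ref{beta}), so that it contributes a strictly positive coefficient to at least one simple root; this is what forces $(\chi,\alpha_{i_j}^\vee)=0$ rather than merely $(\chi,\alpha_{i_j}^\vee)\beta_j=0$. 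You clearly have this in mind, but it is the hinge of the argument.
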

\begin{Lem}\label{lem:Bruhat_order}
\cite[Lemma 4.1]{FW04}
Asume $\chi\in V$ is dominant with $stab_{W}(\chi)=W_J$.
For $x<y\in W^J$, the two conditions below are equivalent.

(a) $\exists \gamma\in R^{+}$ such that
$y W_J= x s_\gamma W_J$,
 
(b) $\exists \beta\in R^{+}$ such that
$y W_J= s_\beta x W_J$.

\noindent
Moreover, if these conditions are satisfied,
then 
$$x(\chi)-y(\chi)=(\chi, \gamma^\vee) \beta.$$
In particular, $\beta$ is unique if it exists.
\end{Lem}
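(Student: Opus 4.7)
The plan is to derive both the equivalence $(a)\iff (b)$ and the formula from the conjugation identity $xs_\gamma = s_{x(\gamma)}x$, with the decisive sign $x(\gamma)\in R^+$ pinned down by the Exchange Condition together with the characterization of minimal length coset representatives.

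For $(a)\Rightarrow (b)$, I would start from $yW_J = xs_\gamma W_J$ and pick $\tau\in W_J$ with $y = xs_\gamma\tau$. Because $y\in W^J$ is the minimal length representative of its coset, one has
$\ell(xs_\gamma) = \ell(y\tau^{-1}) = \ell(y) + \ell(\tau) \geq \ell(y) > \ell(x)$.
The Exchange Condition in its form for arbitrary reflections (a standard consequence of \cite[Theorem 5.4]{Hum}, with Lemma \ref{beta} providing the reduced-word bookkeeping) then forces $x(\gamma)\in R^+$. Setting $\beta := x(\gamma)\in R^+$ and applying the conjugation identity, one obtains $yW_J = s_\beta xW_J$, which is $(b)$. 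The converse $(b)\Rightarrow (a)$ is symmetric: from $y = s_\beta x\tau$ the same length inequality gives $\ell(s_\beta x) > \ell(x)$, hence $x^{-1}(\beta)\in R^+$, and $\gamma := x^{-1}(\beta)$ does the job.

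For the moreover part, with $\gamma$ and $\beta = x(\gamma)$ as above, I would compute directly, using $\tau(\chi) = \chi$ (since $\tau\in W_J = stab_W(\chi)$),
\[
y(\chi) \;=\; xs_\gamma\tau(\chi) \;=\; xs_\gamma(\chi) \;=\; x\bigl(\chi - (\chi,\gamma^\vee)\gamma\bigr) \;=\; x(\chi) - (\chi,\gamma^\vee)\,\beta.
\]
Rearranging yields $x(\chi)-y(\chi) = (\chi,\gamma^\vee)\beta$. Uniqueness of $\beta$ then follows at once from Lemma \ref{lem:non-zero}: the vector $x(\chi) - y(\chi)$ is nonzero, so $\beta$ is a positive root proportional to a fixed nonzero vector, and distinct positive roots in $R$ are never scalar multiples.

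The step I expect to be the main obstacle is the sign control $x(\gamma) \in R^+$ (and dually $x^{-1}(\beta)\in R^+$). Without it, the conjugation identity only delivers $s_\beta = s_{\pm x(\gamma)}$, and the formula would pick up an unwanted sign. The fact that both $x$ and $y$ are minimal length representatives of their $W_J$-cosets is precisely what fuels the length inequality feeding the Exchange Condition and forces the correct sign.
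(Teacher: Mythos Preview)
The paper does not give its own proof of this lemma; it is simply quoted from \cite[Lemma~4.1]{FW04} without argument. Your proposal is therefore not competing with anything in the paper, and as a self-contained proof it is correct: the length computation $\ell(xs_\gamma)=\ell(y\tau^{-1})=\ell(y)+\ell(\tau^{-1})>\ell(x)$ (using that $y\in W^J$ is a minimal coset representative) is exactly what is needed to feed the strong exchange property and conclude $x(\gamma)\in R^+$, after which the conjugation identity $xs_\gamma=s_{x(\gamma)}x$ and the direct computation of $y(\chi)$ go through as you wrote.

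One small point worth tightening is the last sentence. Your uniqueness argument rests on the assertion that distinct positive roots in $R$ are never positive scalar multiples of one another. This is true in the root datum of Section~\ref{sec:Cexeter}, but in the generality allowed there (a possibly non-symmetric ``Cartan'' matrix over $\bbR$) it is not the one-liner it is for the standard symmetric Coxeter root system, and a reference (e.g.\ to \cite{Deo} or \cite{Hum}) would be appropriate. Alternatively you can sidestep the issue: rewriting the formula as $x(\chi)-y(\chi)=(x\chi,\beta^\vee)\beta$ shows directly that any $\beta$ satisfying (b) has $s_\beta(x\chi)=y\chi$, and since $x\chi\neq y\chi$ the reflection $s_\beta$ is then determined as an element of $W$ (it is the unique reflection exchanging the two points of the orbit $W\chi$), whence $\beta$ is determined via the bijection between reflections and positive roots.
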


\section{Twisted group ring and its dual}

\subsection{Kostant--Kumar twisted group ring $H_Q$}\label{H_Q}
Assume $(W,S)$ is a Coxeter system with root datum $(V,\Sigma,\Sigma^\vee)$ (cf. Section \ref{Cox:root}).
The Coxeter group analog of the Kostant--Kumar's twisted group algebra is
$H_Q
:=Q(V)\rtimes {\mathbb R}[W]$, the smash product of the fraction field $Q(V)$ of the symmetric algebra $S(V)$ with the group algebra $ {\mathbb R}[W]$.
As a vector space over $\bR$,
$H_Q=Q(V)\otimes_{\mathbb R} {\mathbb R}[W]$, 
with $Q(V)$-free basis $\{\delta_w \}_{w\in W}$, 
and the multiplication is defined
as follows. For $a=\sum_{w\in W} a_w \delta_w , b=\sum_{u\in W} b_u \delta_u \in H_Q$,
$$a\cdot  b=\sum_{w,u\in W} a_w w(b_u) \delta_{w u}.
$$
If $W$ is the Weyl group of a Kac--Moody algebra with an integral Cartan matrix,
$H_Q\otimes \mathbb C$ is the twisted group ring introduced by Kostant--Kumar \cite{KK86},
where the divided difference operator 
$\partial_i=\frac{1}{\alpha_i} \delta_{s_i}-\frac{1}{\alpha_i}\delta_{id}$ 
and the associated  basis element $\partial_w \in H_Q$ ($w\in W$) are defined.
Here we introduce another (inhomogeneous) basis $\calL_w$,
which is the main tool of our calculation.

\begin{Dfn}\label{def:L_w}
For $i=1,2,\ldots r$,
define $\cLL_i\in H_{Q}$  by $\cLL_i:=\frac{1+\alpha_i}{\alpha_i} \delta_{
s_i}-\frac{1}{\alpha_i}\delta_{id}$, i.e.
$\cLL_i=\partial_i+\delta_{s_i}$.
\end{Dfn}
From the definition,
$\delta_{id}+\alpha_i \cLL_i=(1+\alpha_i) \delta_{s_i}$ and 
$\cLL_i^2=\delta_{id}$. 

\begin{Pro}\label{prop:DL}
 For $w\in W$, the following holds.

\begin{itemize}
\item[(a)$_w$] 
Let $w=s_{i_1}\cdots s_{i_\ell}\in W$ be a reduced expression, and define
$\cLL_w:=\cLL_{i_1}\cdots \cLL_{i_\ell}$. Then
it does not depend on the choice of the reduced expression of $w$.

\item[(b)$_w$] 
For $\chi\in V\subset S(V)$, the following equality holds. (We abbreviate $\chi \delta_{id}$ as  $\chi$.)
\begin{equation}
\cLL_w \chi=w(\chi) \cLL_{w}-
\displaystyle\sum_{\gamma\in S(w^{-1})}( \chi,\gamma^\vee) 
\cLL_{w s_\gamma}.
\end{equation}

\item[(c)$_w$]
Let $\cLL_w=\displaystyle\sum_{v\in W} e_{w,v}  \delta_{v},\; e_{w,v}\in Q(V)$. 
Then
\begin{center}
$e_{w,v}=0$ unless $v\leq w$, and
$e_{w,w}=\displaystyle\prod_{\beta\in S(w)} \frac{1+\beta}{\beta}$.
\end{center}
\end{itemize}

\end{Pro}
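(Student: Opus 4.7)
My plan is to prove (a), (b), (c) by simultaneous induction on $\ell(w)$, with (b) and (c) playing the central role: they will force (a) via a uniqueness argument. The base cases $\ell(w)\leq 1$ reduce to the definition of $\cLL_i$; in particular the $\ell(w)=1$ case of (b) reads $\cLL_i\chi=s_i(\chi)\cLL_i-(\chi,\alpha_i^\vee)\delta_{id}$ and is a direct computation using $\chi-s_i(\chi)=(\chi,\alpha_i^\vee)\alpha_i$ from $(R_3)$.

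For the inductive step, fix a reduced factorization $w=s_iu$ and provisionally set $\cLL_w:=\cLL_i\cLL_u$; the factor $\cLL_u$ is unambiguous by the inductive (a). To establish (c), I expand $\cLL_i\cdot\sum_v e_{u,v}\delta_v$ in the $\{\delta_v\}$-basis of $H_Q$, obtaining the recursion $e_{w,v}=\frac{1+\alpha_i}{\alpha_i}s_i(e_{u,s_iv})-\frac{1}{\alpha_i}e_{u,v}$. For the top coefficient, $s_iw=u$ and $w\not\leq u$, so $e_{w,w}=\frac{1+\alpha_i}{\alpha_i}s_i(e_{u,u})$; Lemma \ref{beta}, applied to a reduced expression for $w$ obtained by prepending $s_i$ to one for $u$, gives $S(w)=\{\alpha_i\}\cup s_i(S(u))$, and this rewrites the top coefficient as $\prod_{\beta\in S(w)}(1+\beta)/\beta$. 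The vanishing of $e_{w,v}$ for $v\not\leq w$ follows from a standard lifting property for Bruhat order, which guarantees that $v\not\leq s_iu$ forces both $v\not\leq u$ and $s_iv\not\leq u$.

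Next I prove (b). Applying the inductive (b) for $u$ and then the base-case identity for $\cLL_i$ (to $f=u(\chi)\in V$) yields, using the $W$-invariance of the pairing,
\[
\cLL_w\chi=w(\chi)\cLL_w-(\chi,\gamma_0^\vee)\cLL_u-\sum_{\gamma\in S(u^{-1})}(\chi,\gamma^\vee)\cLL_i\cLL_{us_\gamma},
\]
where $\gamma_0:=u^{-1}(\alpha_i)\in R^+$ by Lemma \ref{beta}. The combinatorial inputs are: (i) $w(\gamma_0)=-\alpha_i<0$ and $ws_{\gamma_0}=u$, so $\gamma_0\in S(w^{-1})$ and the middle term equals $(\chi,\gamma_0^\vee)\cLL_{ws_{\gamma_0}}$; (ii) a sign check on $u(\gamma)$ for $\gamma\in S(u^{-1})$ gives $S(w^{-1})=S(u^{-1})\sqcup\{\gamma_0\}$; (iii) for every $\gamma\in S(u^{-1})$, the concatenation $s_i\cdot us_\gamma$ is a reduced expression for $ws_\gamma$, so $\cLL_i\cLL_{us_\gamma}=\cLL_{ws_\gamma}$ by the inductive (a). Assembling these gives (b).

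Finally (a) follows from uniqueness. If $\cLL_w$ and $\cLL_w'$ arise from two reduced expressions of $w$, each satisfies (b) and (c), and the $\cLL_{ws_\gamma}$ on the right of (b) live at strictly smaller length. Subtracting, $D:=\cLL_w-\cLL_w'=\sum_{v<w}d_v\delta_v$ (the $\delta_w$-coefficient vanishes by the expression-independent formula in (c)) satisfies $D\chi=w(\chi)D$ for all $\chi\in V$; comparing coefficients forces $d_v\bigl(v(\chi)-w(\chi)\bigr)=0$ for every $v<w$ and $\chi$, and the faithfulness of the $W$-action on $V$ (Remark \ref{rem:faithful}) gives $d_v=0$. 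The main obstacle is precisely (a): a direct verification via Matsumoto's theorem would require checking braid identities $\cLL_i\cLL_j\cLL_i\cdots=\cLL_j\cLL_i\cLL_j\cdots$ ($m_{ij}$ factors) for every pair of generators, which is unpleasant and Coxeter-system-dependent; the approach above trades this braid check for the uniqueness argument afforded by (b) together with the faithful action of $W$ on $V$.
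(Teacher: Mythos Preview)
Your overall architecture is exactly the paper's: simultaneous induction on $\ell(w)$, with (b) feeding a uniqueness argument via faithfulness of $V$ to obtain (a), and (c) supplying the matching top coefficient. The paper's proof is organized the same way (compare its equations (\ref{eq:su-s'v}) and (\ref{eq_w}) with your $D\chi=w(\chi)D$ and top-coefficient step). Your explicit recursion for $e_{w,v}$ and the Bruhat lifting argument for the vanishing in (c) are a slightly more detailed version of what the paper does implicitly.

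There is one genuine slip in your step (iii) of (b). The claim that ``for every $\gamma\in S(u^{-1})$ the concatenation $s_i\cdot us_\gamma$ is reduced'' is false in general: it amounts to $s_\gamma(u^{-1}\alpha_i)>0$, which need not hold. For instance in $S_4$ take $u=s_2s_1s_3s_2$ (one-line $3412$), $i=1$, and $\gamma=e_1-e_4\in S(u^{-1})$; then $us_\gamma=2413$ has length $3$ while $ws_\gamma=s_1us_\gamma=1423$ has length $2$, so $s_1\cdot(us_\gamma)$ is not reduced. The conclusion $\cLL_i\cLL_{us_\gamma}=\cLL_{ws_\gamma}$ you need is nevertheless correct and follows from the inductive (a) together with $\cLL_i^2=\delta_{id}$: if $s_i(us_\gamma)>us_\gamma$ your argument applies; if $s_i(us_\gamma)<us_\gamma$, write $us_\gamma=s_iv$ with $\ell(v)=\ell(us_\gamma)-1$, so $\cLL_{us_\gamma}=\cLL_i\cLL_v$ by the inductive (a), whence $\cLL_i\cLL_{us_\gamma}=\cLL_i^2\cLL_v=\cLL_v=\cLL_{ws_\gamma}$. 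The paper handles this by simply citing ``(a)$_z$ for $z$ of length $<\ell(w)$'' without asserting reducedness. Once you patch this sentence, your proof is complete and essentially identical to the paper's.
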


\begin{proof}

We will prove (a)$_w$,(b)$_w$,(c)$_w$ simultaneously,
by induction on length $\ell(w)$ of $w$ as in
\cite{KK86}.
If $\ell(w)=1$, i.e., $w=s$ for $s\in S$, we can check the formulae  (a)$_{s}$,(b)$_{s}$,(c)$_{s}$ directly.
Assume $\ell(w)>1$ and $u=s w<w$, $s\in S$.
Then we have, by (b)$_u$ and (b)$_{s}$,

$$\cLL_s \cLL_{u} \chi=
w(\chi) \cLL_s \cLL_{u}
-(u(\chi),\alpha_s^\vee)\cLL_u
-
\displaystyle\sum_{\gamma\in S(u^{-1})}( \chi,\gamma^\vee) 
\cLL_s \cLL_{u s_\gamma}. $$

As $S(w^{-1})=S(u^{-1})\cup \{u^{-1}\alpha_s\}$, and (a)$_z$ for elements $z$ of length less than $\ell(w)$,
we have 
$$\cLL_s \cLL_{u} \chi-w(\chi) \cLL_s \cLL_{u}=
\displaystyle -\sum_{\gamma\in S(w^{-1})} ( \chi,\gamma^\vee) \cLL_{w s_\gamma} ,$$
which gives part (b)$_w$.
Likewise, for $s'\in S$ with $v=s' w <w$, we have
$$\cLL_{s'} \cLL_{v} \chi-w(\chi) \cLL_{s'} \cLL_{v}=
\displaystyle -\sum_{\gamma\in S(w^{-1})} ( \chi,\gamma^\vee) \cLL_{w s_\gamma} .$$
Therefore,
\begin{equation}\label{eq:su-s'v}
\cLL_s \cLL_{u} \chi-w(\chi) \cLL_s \cLL_{u}= 
\cLL_{s'} \cLL_{v} \chi-w(\chi) \cLL_{s'} \cLL_{v}.
\end{equation}
\noindent
Write $\cLL_s \cLL_{u} = \sum_{x\in W} q_x \delta_x$ , and
$\cLL_{s'} \cLL_{v} = \sum_{x\in W} q'_x \delta_x$ $(q_x,q'_x\in Q(V))$.
Then by (c)$_u$ and (c)$_v$, we have
$q_x=q'_x=0$ unless $x\leq w$ and

\begin{equation}\label{eq_w}
q_w=\prod_{\beta\in S(w)}\frac{1+\beta}{\beta}=q'_w,
\end{equation}
which gives part (c)$_w$.
From (\ref{eq:su-s'v}) we have
$$(x(\chi)- w(\chi))q_x=( x(\chi) - w(\chi))q'_x \text{ for } \forall x \in W.$$
\noindent
As $V$ is faithful (Remark \ref{rem:faithful}), we have $q_x=q'_x$ for $x\neq w$.
Together with (\ref{eq_w}), we have
$$\cLL_s \cLL_{u} = \cLL_{s'} \cLL_{v} ,$$
which proves part  (a)$_w$.
\end{proof}

By this Proposition, $\{\cLL_w\}_{w\in W}$ forms a 
basis of the left $Q(V)$-module $H_Q$, and if we expand
$\cLL_w \chi$ in this basis
\begin{equation}\label{chev}
\cLL_w \chi=\displaystyle\sum_{v\in W} c^w_{\chi,v} \cLL_v, \;\;c^w_{\chi,v}\in Q(V), \chi\in V,
\end{equation}
we have
\begin{equation}\label{coef:c}
\begin{array}{ccc}
c_{\chi,v}^{w}=
\left\{\begin{array}{ll}
w(\chi)& \text{ if } v=w\\
-(\chi, \gamma^\vee) &\text{ if } v<w=v s_\gamma, \gamma\in R^{+}\\
0& \text{ otherwise }\\
\end{array}.
\right.
\end{array}
\end{equation}

\subsection{Dual basis $\{\eta^w\}_{w\in W}$ of $\{\cLL_w\}_{w\in W}$}\label{sec:dual}
Let $\Fun(W, Q(V))$ denote the ring of functions on $W$ with values in $Q(V)$, with natural $Q(V)$-module structure by
$ (q \xi)(w)= q \xi(w)$ for $q\in Q(V), w\in W$, and $\xi\in  \Fun(W, Q(V))$.
There is a perfect pairing
$\langle \cdot  , \cdot \rangle: H_Q\times
\Fun(W, Q(V))
\to Q(V)$, given by
\begin{equation}\label{def_pairing}
{\lrb{a}{\xi}=\displaystyle\sum_{w\in W} a_w \xi(w),}
\hspace{3cm}\\
\end{equation}
$\text{ for }a=\displaystyle\sum_{w\in W} a_w \delta_w\in H_Q
\text{ and }\xi\in  \Fun(W, Q(V)).$
Here a pairing 
$\langle ,\rangle : M_1\times M_2\to Q(V)$ is perfect 
if it induces an isomorphism $M_1^*\simeq M_2$ of $Q(V)$-modules.
\begin{Lem}\label{adjoint}
For $\chi\in V$,
define  $L_\chi\in \Fun(W,Q(V))$ by $L_\chi(w):=w(\chi)$. Then we have
\begin{equation}
\langle h  \chi , f \rangle=\langle h, L_{\chi} f\rangle \hspace{0.5cm} 
\end{equation}
$\text{ for } h\in H_Q, f \in \Fun(W,Q(V)).$
\end{Lem}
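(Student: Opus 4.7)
The plan is to verify the identity by a direct computation on the defining basis, using the multiplication rule in $H_Q$. Since both sides of the claimed equality are $Q(V)$-linear in $h$ and in $f$, and since $H_Q$ has $Q(V)$-basis $\{\delta_w\}_{w\in W}$, it suffices to check the identity for $h=\delta_w$ (and then extend by $Q(V)$-linearity to general $h=\sum_{w\in W}h_w\delta_w$).

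First I would unpack the left side. Using the smash product rule in $H_Q$ (Section~\ref{H_Q}) together with the convention $\chi = \chi\,\delta_{id}$, we have
\begin{equation*}
\delta_w \cdot \chi \;=\; \delta_w\cdot(\chi\,\delta_{id}) \;=\; w(\chi)\,\delta_w.
\end{equation*}
Consequently, for $h=\sum_{w}h_w\delta_w$, the expansion $h\chi = \sum_{w} h_w\,w(\chi)\,\delta_w$ holds. Applying the pairing~(\ref{def_pairing}) gives
\begin{equation*}
\lrb{h\chi}{f} \;=\; \sum_{w\in W} h_w\,w(\chi)\,f(w).
\end{equation*}

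Next I would unpack the right side. By definition $L_\chi(w)=w(\chi)$, so $(L_\chi f)(w)=w(\chi)f(w)$ in $\Fun(W,Q(V))$. Hence
\begin{equation*}
\lrb{h}{L_\chi f} \;=\; \sum_{w\in W} h_w\,(L_\chi f)(w) \;=\; \sum_{w\in W} h_w\,w(\chi)\,f(w),
\end{equation*}
which matches the previous display, completing the proof.

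There is no real obstacle here: the lemma is essentially a bookkeeping statement that says multiplication by $\chi\in V\subset S(V)\subset Q(V)$ on the $H_Q$ side is adjoint, with respect to the pairing $\lrb{\cdot}{\cdot}$, to multiplication by the function $L_\chi$ on the $\Fun(W,Q(V))$ side. The only point to be careful about is the direction of the action: since $\chi$ commutes past $\delta_w$ only at the cost of applying $w$, it is the function $w\mapsto w(\chi)$ (rather than the constant function $\chi$) that appears on the dual side, and this is exactly the definition of $L_\chi$.
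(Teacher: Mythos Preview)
Your proof is correct and follows essentially the same direct computation as the paper: expand $h=\sum_w a_w\delta_w$, use $\delta_w\chi=w(\chi)\delta_w$ to get $h\chi=\sum_w a_w w(\chi)\delta_w$, and then match the resulting expression for $\lrb{h\chi}{f}$ with $\lrb{h}{L_\chi f}$.
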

\begin{proof}
If $h=\displaystyle\sum_{w\in W} a_w \delta_w$, then $h \chi=\displaystyle\sum_{w\in W} a_w w(\chi) \delta_w$.
Therefore
we have
$\langle h \chi , f \rangle=\displaystyle\sum_{w\in W} a_w w(\chi) f(w)=
\sum_{w\in W} a_w (L_\chi f)(w)=\langle h  , L_\chi f \rangle.$
\end{proof}

We can define element $\eta^v\in \Fun(W, Q(V))$ for $v\in W$  by duality
\begin{equation}\label{duality}
\lrb{\cLL_w}{\eta^{v}}=\delta_{w,v}
\hspace{1cm} \text{ for } \forall w\in W.
\end{equation}
Using this duality and Proposition \ref{prop:DL} (c)$_w$, we have

\begin{equation}\label{eta}
\eta^v(w)=0 \text{ unless } v\leq w \text{, and } \eta^w(w)=\prod_{\beta\in S(w)} \frac{\beta}{1+\beta}.
\end{equation}

Let $J\subset S$ be a subset of the generators of $W$.

\begin{Dfn}\label{para_J}
 For $v\in W^J$, define $\eta^v_J\in \Fun(W^J, Q(V))$ by
\begin{equation}\label{def_J}
\eta^v_J(w):=\sum_{u\in v W_J, u\leq w} \eta^u(w) \text{ for } w\in W^J.
\end{equation}
\end{Dfn}
We can formally write $\eta^v_J=\displaystyle\sum_{u\in v W_J} \eta^u$, as  $ \eta^u(w)=0$ if 
$u\not\leq w$.
From (\ref{eta})
it follows that for $v,w\in W^J$,
\begin{equation}\label{eta_J}
\eta^v_J(w)=0 \text{ unless } v\leq w \text{, and } \eta^w_J(w)=\prod_{\beta\in S(w)} \frac{\beta}{1+\beta}.
\end{equation}
Thus, $\{\eta_J^w\mid w\in W^J\}$ is a basis for $\Fun(W^J, Q(V))$ over $Q(V$).

For any $v,z\in W^J$ and $\chi\in V^{W_J}$,
define 	\begin{equation}\label{def_c^J}
		c_{\chi,v}^{z,J}:=\sum_{u\in W_J} c_{\chi,v}^{zu}.
\end{equation}
By the equalities $(\ref{coef:c})$ and $(\ref{def_c^J})$,
we have the following equality.
\begin{Lem}\label{lem:coeffJ}
		\[c_{\chi,v}^{z,J}=\begin{cases} v\chi, & \textit{ if } v=z;\\
			-(\chi,\gamma^\vee), & \textit{ if } vW_J<vs_\gamma W_J=zW_J,\gamma\in R^+;\\
			0, & \textit{otherwise.}
			\end{cases}
	 \]
\end{Lem}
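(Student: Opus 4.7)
The plan is a case analysis on $v,z\in W^J$ applied to each term of the defining sum $c^{z,J}_{\chi,v}=\sum_{u\in W_J}c^{zu}_{\chi,v}$, using the explicit values of $c^w_{\chi,v}$ from (\ref{coef:c}).

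\emph{Diagonal case $v=z$.} The term $u=\id$ contributes $v(\chi)$. A non-identity $u$ gives a nonzero contribution only when $u=s_\gamma$ is a reflection lying in $W_J$ with $v<vs_\gamma$, in which case $\gamma$ is a positive root of the reflection subgroup generated by $W_J$. The reflection formula $s_\gamma(\chi)=\chi-(\chi,\gamma^\vee)\gamma=\chi$, valid since $\chi\in V^{W_J}$, forces $(\chi,\gamma^\vee)=0$ and kills the term. Hence the sum is $v(\chi)$.

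\emph{Off-diagonal case $v\neq z$.} A term $c^{zu}_{\chi,v}$ is nonzero only if $zu=vs_{\gamma'}$ with $\gamma'\in R^+$ and $v<zu$. Projecting to $W^J$ via minimal-length representatives (which preserves Bruhat order), $v<zu$ forces $vW_J<vs_{\gamma'}W_J=zW_J$. Consequently the sum vanishes in the ``otherwise'' branch. Conversely, when $vW_J<vs_\gamma W_J=zW_J$ holds for some $\gamma\in R^+$, the index $u_0:=z^{-1}vs_\gamma$ lies in $W_J$ and satisfies $zu_0=vs_\gamma$; since $vs_\gamma\in zW_J$ with $z$ minimal, $\ell(vs_\gamma)\geq\ell(z)>\ell(v)$ yields $v<zu_0$. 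So $u_0$ contributes $-(\chi,\gamma^\vee)$.

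\emph{Uniqueness of the contributing index.} The key remaining step is to show $u_0$ is the only index giving a nonzero contribution. If $u\in W_J$ contributes with $zu=vs_{\gamma_u}$ and $\gamma_u\in R^+$, the conjugation identity $vs_{\gamma_u}=s_{v(\gamma_u)}v$ rewrites the condition as $s_{v(\gamma_u)}vW_J=zW_J$; moreover $v(\gamma_u)\in R^+$ because $\ell(vs_{\gamma_u})>\ell(v)$. By the uniqueness of $\beta$ in Lemma \ref{lem:Bruhat_order}, $v(\gamma_u)=\beta$ and hence $\gamma_u=v^{-1}\beta$. The same reasoning applied to the hypothesis root $\gamma$ yields $v(\gamma)=\beta$ as well, so $\gamma_u=\gamma$ and $u=u_0$.

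The main obstacle is this uniqueness argument, which requires extracting the combinatorial uniqueness of $\beta$ from Lemma \ref{lem:Bruhat_order} and transferring it via the conjugation identity for reflections into uniqueness of the parabolic index $u$. A minor subtlety is that Lemma \ref{lem:Bruhat_order} is stated under strict dominance of $\chi$ with $\stab_W(\chi)=W_J$, whereas here we only assume $\chi\in V^{W_J}$; this is handled by noting that both sides of the claimed identity are $\mathbb R$-linear in $\chi\in V^{W_J}$ and checking the identity at generic dominant $\chi$ with exact stabilizer $W_J$.
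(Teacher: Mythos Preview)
Your proof is correct and is essentially a careful elaboration of the paper's one-line justification (``By the equalities (\ref{coef:c}) and (\ref{def_c^J})''). The case analysis you give is exactly what one must do to unwind the definition, and your identification of the uniqueness of the contributing index $u_0$ as the only nontrivial point, together with its resolution via the uniqueness of $\beta$ in Lemma~\ref{lem:Bruhat_order}, is the right argument.

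One small simplification: your linearity workaround at the end is valid but unnecessary. The uniqueness of $\beta$ in Lemma~\ref{lem:Bruhat_order} is a purely combinatorial statement about $W^J$ (it does not mention the particular $\chi$ you are using in Lemma~\ref{lem:coeffJ}); the dominance hypothesis there is only used to \emph{prove} uniqueness via the identity $x(\chi)-y(\chi)=(\chi,\gamma^\vee)\beta$. Since for any $J\subset S$ one can always choose some dominant $\pi_J\in V$ with $\stab_W(\pi_J)=W_J$ (take any element with $(\pi_J,\alpha_i^\vee)>0$ for $i\notin J$ and $=0$ for $i\in J$, which exists by the linear independence assumption ($R_1$)), the uniqueness of $\beta$ holds outright and can be invoked directly, without passing through linearity in $\chi$.
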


\begin{Pro}\label{chev:eta}
	For $\chi\in V^{W_J}$, define $L^J_{\chi}\in \Fun(W^J,Q(V))$ by $L^J_{\chi}(z)=z(\chi)$ for $z\in W^J$.
	Then for any $v\in W^J$,
	\begin{equation}\label{chev_J}
		(L^J_{\chi} \eta^v_J) (w)=\displaystyle\sum_{z\in W^J, v\leq z\leq w} c_{\chi,v}^{z,J} \eta^z_J (w),
		\text{ for }w\in W^J.
	\end{equation}
	We can formally write $L^J_{\chi} \eta^v_J=\displaystyle\sum_{z\in W^J, v\leq z} c_{\chi,v}^{z,J} \eta^z_J$.
\end{Pro}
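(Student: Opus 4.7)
The plan is to reduce (\ref{chev_J}) to a non-parabolic Chevalley rule on the $\{\eta^v\}_{v\in W}$-basis together with an identity $(\ast)$ collapsing sums over $W_J$-cosets; $(\ast)$ will then follow from a commutation relation between $\chi\in V^{W_J}$ and the $\cLL_u$ for $u\in W_J$. The non-parabolic rule reads $L_\chi\eta^v(w)=\sum_{z\in W}c^z_{\chi,v}\eta^z(w)$ (pointwise-finite, since $\eta^z(w)=0$ unless $z\le w$), and follows by applying $\lrb{\cLL_w}{\cdot}$ to both sides: Lemma~\ref{adjoint} together with (\ref{chev}) and (\ref{duality}) gives $c^w_{\chi,v}$ on each side, and the triangular structure in Proposition~\ref{prop:DL}(c) determines the function uniquely. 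Substituting $\eta^v_J=\sum_{a\in W_J}\eta^{va}$ and grouping $y\in W$ by $W_J$-cosets $y\in zW_J$, $z\in W^J$, produces
\[ (L^J_\chi\eta^v_J)(w)=\sum_{z\in W^J}\sum_{y\in zW_J}\Bigl(\sum_{u\in vW_J}c^y_{\chi,u}\Bigr)\eta^y(w), \]
so the proposition reduces to the identity
\[ \sum_{u\in vW_J}c^y_{\chi,u}=c^{\bar y,J}_{\chi,v}\qquad(y\in W,\ \bar y\in W^J\ \text{minimal}).\qquad(\ast) \]

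The key ingredient for $(\ast)$ is the commutation $\cLL_u\chi=\chi\cLL_u$ in $H_Q$ for every $u\in W_J$. By Proposition~\ref{prop:DL}(b), $\cLL_u\chi=u(\chi)\cLL_u-\sum_{\gamma\in S(u^{-1})}(\chi,\gamma^\vee)\cLL_{us_\gamma}$; here $u(\chi)=\chi$ by $W_J$-invariance of $\chi$, and since $W_J$ permutes the positive roots of $W$ not belonging to $W_J$ one has $S(u^{-1})\subseteq R^+_J$ (the positive roots of $W_J$), so every $\gamma^\vee$ in the correction lies in $\sum_{j\in J}\bR\alpha_j^\vee$ and pairs trivially with $\chi$. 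Writing $y=\bar y u_0$ with $u_0\in W_J$, this commutation yields $\cLL_y\chi=\cLL_{\bar y}\chi\cLL_{u_0}$, whence $c^y_{\chi,u}=c^{\bar y}_{\chi,uu_0^{-1}}$ by matching $\cLL_u$-coefficients; summing over $u\in vW_J$ and reindexing $u\mapsto uu_0^{-1}$ (which permutes $vW_J$, since $W_Ju_0^{-1}=W_J$) reduces $(\ast)$ to the case $y\in W^J$. For $y=\bar y\in W^J$, the same identity with $u=v$ gives $c^{\bar y u_0}_{\chi,v}=c^{\bar y}_{\chi,vu_0^{-1}}$; summing over $u_0\in W_J$ using the bijection $u_0\mapsto vu_0^{-1}$ from $W_J$ onto $vW_J$ identifies $c^{\bar y,J}_{\chi,v}$ with $\sum_{u\in vW_J}c^{\bar y}_{\chi,u}$, completing the proof of $(\ast)$. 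Only finitely many $u_0$ contribute to any given $\cLL_v$-coefficient, so infiniteness of $W_J$ causes no trouble.

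Substituting $(\ast)$ into the earlier expression and collecting $\sum_{y\in zW_J}\eta^y=\eta^z_J$ produces (\ref{chev_J}); the sum restricts to $v\le z\le w$ because $\eta^z_J(w)=0$ unless $z\le w$ and, by Lemma~\ref{lem:coeffJ}, $c^{z,J}_{\chi,v}=0$ unless $v\le z$. The main obstacle is the commutation $\cLL_u\chi=\chi\cLL_u$; verifying it requires the structural facts $S(u^{-1})\subseteq R^+_J$ for $u\in W_J$ and $\gamma^\vee\in\sum_{j\in J}\bR\alpha_j^\vee$ for $\gamma\in R^+_J$, after which the remainder of the argument is reindexing and bookkeeping through the definition (\ref{def_c^J}).
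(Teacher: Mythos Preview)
Your proof is correct and follows essentially the same route as the paper's. The paper derives the non-parabolic Chevalley rule exactly as you do (via Lemma~\ref{adjoint}, (\ref{chev}), (\ref{duality})), then records the identity $c^{zy}_{\chi,vu}=c^{zyu^{-1}}_{\chi,v}$ for $z,v\in W^J$, $y,u\in W_J$ (``easy to see''), and says (\ref{chev_J}) follows; your identity $c^{y}_{\chi,u}=c^{\bar y}_{\chi,uu_0^{-1}}$ is the same statement, and your commutation $\cLL_u\chi=\chi\cLL_u$ for $u\in W_J$ is precisely a clean way to justify it. One point you use implicitly in ``matching $\cLL_u$-coefficients'' deserves a sentence: right multiplication by $\cLL_{u_0}$ sends $\cLL_x\mapsto\cLL_{xu_0}$ for \emph{every} $x\in W$, not only when lengths add, because $\cLL_i^2=\delta_{id}$ together with Proposition~\ref{prop:DL}(a) makes $w\mapsto\cLL_w$ a group homomorphism $W\to H_Q^\times$; this is what lets you read off $c^{\bar y}_{\chi,uu_0^{-1}}$ directly.
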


\begin{proof}
	Combining duality (\ref{duality}), Equation (\ref{chev}) and Lemma \ref{adjoint}, we get
	\begin{equation} (L_{\chi} \eta^v) (w)=\displaystyle\sum_{z\in W, v\leq z\leq w} c_{\chi,v}^{z} \eta^z (w),
		\text{ for }w\in W.
	\end{equation}
    Moreover, it is easy to see that $c_{\chi,vu}^{zy}=c_{\chi,v}^{zyu^{-1}}$ for any $z,v\in W^J$ and $y,u\in W_J$. Then Equation (\ref{chev_J}) readily follows by these observations.
\end{proof}

For $u,v,w\in W^J$, let $d^{w,J}_{u,v}\in Q(V)$ be the structure constants, i.e.,
\begin{equation}\label{lr_d}
\eta^u_J \eta^v_J=\sum_{w\in W^J} d^{w,J}_{u,v} \eta^w_J.
\end{equation}

\begin{Lem}\label{d^{w,J}_{u,v}}
 The coefficients $d^{w,J}_{u,v}\in Q(V)$ have the following properties.

\begin{itemize}
\item[(i)] For $u,v,w\in W^J$, $d^{w,J}_{u,v}=0$ unless $u\leq w$ and $v\leq w$.\\
\item[(ii)] For $v,w\in W^J$, $d^{w,J}_{v,w}=\eta^v_J(w)$,\\
\item[(iii)] For $w\in W^J$, $d^{w,J}_{w,w}=\displaystyle\prod_{\beta\in S(w)} \frac{\beta}{1+\beta}$.
\end{itemize}
\end{Lem}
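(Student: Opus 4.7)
The plan is to exploit the strict triangularity of the basis $\{\eta^w_J\}_{w\in W^J}$ expressed in \eqref{eta_J}: $\eta^w_J(x)=0$ unless $w\le x$, and $\eta^w_J(w)=\prod_{\beta\in S(w)}\beta/(1+\beta)$ is invertible in $Q(V)$. These two facts turn the defining equation \eqref{lr_d} into an upper-triangular linear system that can be inverted row-by-row, and all three parts of the lemma will drop out of this inversion.

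Concretely, I would first evaluate the product $\eta^u_J\,\eta^v_J=\sum_{w\in W^J}d^{w,J}_{u,v}\,\eta^w_J$ pointwise at an arbitrary $x\in W^J$. Because $\eta^w_J(x)=0$ for $w\not\le x$, only finitely many terms survive and we obtain
\[
\eta^u_J(x)\,\eta^v_J(x)\;=\;\sum_{w\in W^J,\;w\le x}d^{w,J}_{u,v}\,\eta^w_J(x).
\]
Solving for the diagonal term $d^{x,J}_{u,v}$ using invertibility of $\eta^x_J(x)$ yields the recursion
\[
d^{x,J}_{u,v}\;=\;\frac{1}{\eta^x_J(x)}\Bigl(\eta^u_J(x)\,\eta^v_J(x)-\sum_{w<x,\;w\in W^J}d^{w,J}_{u,v}\,\eta^w_J(x)\Bigr),
\]
which determines every structure constant by induction on $\ell(x)$ (in particular, only finitely many $d^{w,J}_{u,v}$ are nonzero for given $u,v$).

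For part (i), I would induct on $\ell(x)$ and show that if $u\not\le x$ (the case $v\not\le x$ being symmetric) then the RHS above vanishes. The first term is zero since $\eta^u_J(x)=0$. For each $w<x$ appearing in the sum, transitivity of the Bruhat order gives $u\not\le w$ (otherwise $u\le w\le x$), so by the inductive hypothesis $d^{w,J}_{u,v}=0$. Hence $d^{x,J}_{u,v}=0$, completing (i).

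Parts (ii) and (iii) then follow as immediate consequences. For (ii), evaluate $\eta^v_J\,\eta^w_J=\sum_z d^{z,J}_{v,w}\eta^z_J$ at $x=w$; on the right only those $z\le w$ with $d^{z,J}_{v,w}\ne 0$ contribute, and (i) forces $w\le z$, hence $z=w$, giving $\eta^v_J(w)\,\eta^w_J(w)=d^{w,J}_{v,w}\,\eta^w_J(w)$ and thus $d^{w,J}_{v,w}=\eta^v_J(w)$. Part (iii) is the special case $v=w$ of (ii) combined with the explicit value $\eta^w_J(w)=\prod_{\beta\in S(w)}\beta/(1+\beta)$. There is no serious obstacle here; the only thing requiring a little care is justifying the finiteness of the sum $\sum_w d^{w,J}_{u,v}\eta^w_J$, which the triangular recursion handles automatically.
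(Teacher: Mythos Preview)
Your proof is correct and follows essentially the same route as the paper's. The paper proves (i) via a minimal-counterexample argument (take a Bruhat-minimal $w$ with $d^{w,J}_{u,v}\ne 0$ and $u\not\le w$ or $v\not\le w$, evaluate \eqref{lr_d} at $w$, and derive a contradiction), which is the contrapositive packaging of your induction on $\ell(x)$; parts (ii) and (iii) are obtained identically in both arguments by evaluating at $w$ and using \eqref{eta_J}. Your explicit recursion and the remark on finiteness are a small bonus not spelled out in the paper, but the underlying idea is the same triangularity argument.
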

\begin{proof}
Given $u,v\in W^J$, if there is a $w\in W^J$ such that $u\not \leq w$ or
$v\not\leq w$ and $d^{w,J}_{u,v}\neq 0$, take a minimal such $w$ in Bruhat order and evaluate both sides of  (\ref{lr_d}) at $w$.
The left hand side becomes 0, but right hand side
is nonzero by (\ref{eta_J}), which gives a contradiction. Therefore (i) holds.
As $\eta^v_J(w) \eta^w_J (w)=d_{v,w}^{w,J} \eta^{w}_J(w)$,  and $\eta^{w}_J(w)=\eta^{w}(w)\neq 0$, we have (ii).
Finally, (iii) follows from Equation (\ref{eta_J}).

\end{proof}

\begin{Pro}
\label{lem:recLR}
For any 
$u,v,w\in W^J$ and any vector $\chi\in V^{W_J}$, 
the following holds:

$\displaystyle
(c^{w,J}_{\chi,w} - c^{u,J}_{\chi,u}) d^{w,J}_{u,v}=
\sum_{\substack{u< x\leq w},x\in W^J} 
c^{x,J}_{\chi,u}d^{w,J}_{x,v}-\sum_{\substack{ u,v\leq y<w,y\in W^J}} c^{w,J}_{\chi,y} d^{y,J}_{u,v} \/.
$
\end{Pro}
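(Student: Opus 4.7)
The plan is to compute the coefficient of $\eta^w_J$ in the triple product $L^J_\chi \cdot \eta^u_J \cdot \eta^v_J$, taken inside the commutative ring $\Fun(W^J,Q(V))$, in two different ways and equate the results. This is the standard two-way counting pattern behind Molev--Sagan type recursions: one expansion uses $L^J_\chi$ against $\eta^u_J$ first, the other uses the product $\eta^u_J\eta^v_J$ first.

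For the first expansion, $L^J_\chi \eta^u_J$ can be rewritten by Proposition \ref{chev:eta} as $\sum_{x\in W^J,\, u\leq x} c^{x,J}_{\chi,u}\,\eta^x_J$, and then multiplying by $\eta^v_J$ using the structure constants (\ref{lr_d}) gives
\begin{equation*}
(L^J_\chi\eta^u_J)\,\eta^v_J \;=\; \sum_{w\in W^J}\Bigl(\sum_{x\in W^J,\,u\leq x} c^{x,J}_{\chi,u}\,d^{w,J}_{x,v}\Bigr)\eta^w_J.
\end{equation*}
For the second expansion, write $\eta^u_J\eta^v_J = \sum_{y\in W^J} d^{y,J}_{u,v}\,\eta^y_J$ first and then apply Proposition \ref{chev:eta} to each $L^J_\chi\eta^y_J$, yielding coefficient $\sum_{y\in W^J,\,y\leq w} d^{y,J}_{u,v}\,c^{w,J}_{\chi,y}$ of $\eta^w_J$. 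Since pointwise multiplication of functions in $\Fun(W^J,Q(V))$ is associative and commutative, the two coefficients must agree.

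Equating them gives
\begin{equation*}
\sum_{x\in W^J,\,u\leq x} c^{x,J}_{\chi,u}\,d^{w,J}_{x,v} \;=\; \sum_{y\in W^J,\,y\leq w} d^{y,J}_{u,v}\,c^{w,J}_{\chi,y}.
\end{equation*}
Isolating the diagonal term $x=u$ on the left and $y=w$ on the right, and using Lemma \ref{d^{w,J}_{u,v}}(i) to cut the ranges (so that $d^{w,J}_{x,v}=0$ unless $x\leq w$, and $d^{y,J}_{u,v}=0$ unless $u,v\leq y$), one obtains exactly
\begin{equation*}
(c^{w,J}_{\chi,w}-c^{u,J}_{\chi,u})\,d^{w,J}_{u,v} \;=\; \sum_{u<x\leq w,\,x\in W^J} c^{x,J}_{\chi,u}\,d^{w,J}_{x,v}\;-\;\sum_{u,v\leq y<w,\,y\in W^J} c^{w,J}_{\chi,y}\,d^{y,J}_{u,v}.
\end{equation*}

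There is no real conceptual obstacle; the whole argument is bookkeeping. The one point that needs a sentence of justification is that $L^J_\chi$, being an element of $\Fun(W^J,Q(V))$ rather than of $H_Q$, acts on the $\eta^v_J$ by pointwise multiplication and therefore freely associates with the $\eta$-product — this is immediate from the definitions, and is what unlocks the two-way expansion. The rest is just carefully separating the diagonal $x=u$ and $y=w$ contributions from the non-diagonal ones.
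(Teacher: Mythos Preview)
Your proof is correct and is essentially the paper's own argument: both compute the coefficient of $\eta^w_J$ in $L^J_\chi(\eta^u_J\eta^v_J)=(L^J_\chi\eta^u_J)\eta^v_J$ by expanding each side with Proposition~\ref{chev:eta} and (\ref{lr_d}), then separate the diagonal terms $x=u$ and $y=w$. Your explicit invocation of Lemma~\ref{d^{w,J}_{u,v}}(i) to trim the summation ranges is the only detail the paper leaves implicit.
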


\begin{proof} By taking the coefficient of $\eta^w_J$ in
$L_\chi (\eta^u_J \eta^v_J) =(L_\chi \eta^u_J) \eta^v_J$,
we have

$\displaystyle c^{w,J}_{\chi,w} d^{w,J}_{u,v}+\sum_{u ,v\le y < w,y\in W^J} c^{w,J}_{\chi,y} d^{y,J}_{u,v}
=c^{u,J}_{\chi,u} d^{w,J}_{u,v}+\sum_{u < x \le w,x\in W^J} c^{x,J}_{\chi,u} d^{w,J}_{x,v}$,

from which the assertion holds.
\end{proof}

\begin{Cor}
\label{lem:recLR}
If $\chi\in V^{W_J}$ 
satisfies $c^{w,J}_{\chi,w} \neq c^{u,J}_{\chi,u}$ 
(e.g. $\chi=\pi_J$, cf.  Lemma \ref{lem:non-zero}.),
 then we have
$$
d^{w,J}_{u,v}=\displaystyle\frac{1}{ c^{w,J}_{\chi,w} - c^{u,J}_{\chi,u} }\times
\left(\sum_{u < x \le w, x\in W^J} c^{x,J}_{\chi,u}d^{w,J}_{x,v}-\sum_{u,v \le y < w, y\in W^J} c^{w,J}_{\chi,y} d^{y,J}_{u,v}\right).
$$

In particular, for the case $v=w$,
\begin{equation}
\label{eq:recLR}
d^{w,J}_{u,w}=\sum_{x\in W^J, u < x \le w}\frac{c^{x,J}_{\chi,u} }{ c^{w,J}_{\chi,w} - c^{u,J}_{\chi,u} }\; d^{w,J}_{x,w}.
\end{equation}
\end{Cor}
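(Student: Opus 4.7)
The plan is to derive this corollary as an essentially formal consequence of the preceding Proposition (the Molev--Sagan-type recursion), together with the vanishing properties of the structure constants from Lemma \ref{d^{w,J}_{u,v}}(i).

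First I would invoke the identity
$$(c^{w,J}_{\chi,w} - c^{u,J}_{\chi,u})\, d^{w,J}_{u,v}=\sum_{u<x\le w,\, x\in W^J} c^{x,J}_{\chi,u}\, d^{w,J}_{x,v}-\sum_{u,v\le y<w,\, y\in W^J} c^{w,J}_{\chi,y}\, d^{y,J}_{u,v}$$
established in the proposition. Under the hypothesis $c^{w,J}_{\chi,w}\neq c^{u,J}_{\chi,u}$, the scalar $c^{w,J}_{\chi,w} - c^{u,J}_{\chi,u}\in Q(V)$ is invertible, so dividing both sides yields the first displayed formula verbatim. I would also note that the hypothesis is satisfied for $\chi=\pi_J$ (a $W_J$-fixed vector with $\Stab_W(\chi)=W_J$) by Lemma \ref{lem:non-zero}: for $u\neq w$ in $W^J$ we have $u(\chi)\neq w(\chi)$, which by Lemma \ref{lem:coeffJ} is exactly the statement $c^{u,J}_{\chi,u}\neq c^{w,J}_{\chi,w}$.

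For the specialization $v=w$, the key observation is that the second sum collapses. Indeed, each term there carries the factor $d^{y,J}_{u,w}$ with $y<w$ in $W^J$; by Lemma \ref{d^{w,J}_{u,v}}(i), this factor vanishes since $w\not\le y$. Only the first sum survives, giving
$$d^{w,J}_{u,w}=\sum_{x\in W^J,\, u<x\le w}\frac{c^{x,J}_{\chi,u}}{c^{w,J}_{\chi,w}-c^{u,J}_{\chi,u}}\, d^{w,J}_{x,w},$$
which is the recursion \eqref{eq:recLR}.

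There is no serious obstacle here: the proposition already does all the work, and the only subtlety is recognizing the vanishing via Lemma \ref{d^{w,J}_{u,v}}(i) in order to pass from the general formula to the simpler recursion for $v=w$. The cleanest exposition is therefore two short paragraphs: one that divides through in the proposition, and one that drops the second sum using the support condition on the structure constants.
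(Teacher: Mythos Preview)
Your proposal is correct and matches the paper's approach: the Corollary is stated there without an explicit proof because it is immediate from the preceding Proposition, and your two steps---dividing through by the nonzero factor $c^{w,J}_{\chi,w}-c^{u,J}_{\chi,u}$, then using Lemma~\ref{d^{w,J}_{u,v}}(i) to kill the second sum when $v=w$---are exactly what is intended.
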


\section{Yang--Baxter elements in the group algebra $Q(V)[W]$}

Let $Q(V)[W]$ denote the group algebra of $W$ over $Q[V]$.
The purpose of this section is to relate $H_Q$ with $Q(V)[W]$ through a left $Q(V)$-module homomorphism
and prove the equations in Corollary \ref{sum=1}.
As a byproduct, we get a simple proof for the Yang--Baxter relations for the Coxeter groups (
\cite[Proposition\ref{prop:Shi}]{Shi}).

Let $\Delta_i:=(1+\alpha_i) \delta_{s_i} =1+\alpha_i \cLL_i\in H_Q$. For a reduced expression $w=s_{i_1}\cdots s_{i_\ell}$, define
\begin{equation}\label{Delta_w}
\Delta_w:=\Delta_{i_1} \Delta_{i_2} \cdots \Delta_{i_\ell}\in H_Q.
\end{equation}

Then it is easy to see that
$\Delta_w=A(w) \delta_w$, where $A(w)=\displaystyle\prod_{j=1}^\ell (1+ \beta_j)$ for $\beta_j$ defined in (\ref{beta_j}).
By Lemma \ref{beta},
$\Delta_w$ does not depend on the choice of the reduced expression for $w$.
We can  expand $\Delta_w$ in terms of $\{\cLL_v\}_{v\in W}$ as follows.
\begin{equation}\label{Delta_to_cLL}
\Delta_w=\sum_{v\in W} q(v,w) \cLL_v,\;\;q(v,w)\in Q(V).
\end{equation}

\begin{Lem}\label{prop_q}
The coefficients $q(v,w)$ satisfy the following properties.

\begin{itemize}
\item[(i)]\label{prop_q_loc} $q(v,w)=A(w) \eta^v(w).$
\item[(ii)]\label{prop_q_support} $q(v,w)=0$ unless $v\leq w$.
\item[(iii)]\label{prop_q_rec} If $s_i w>w$,
\begin{equation}
q(u,s_i w)=s_i(q(u,w))+\alpha_i s_i(q(s_i u,w)).
\end{equation}
\end{itemize}
\end{Lem}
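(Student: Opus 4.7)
For part (i), I will pair the two expansions $\Delta_w = A(w)\delta_w$ and $\Delta_w = \sum_v q(v,w)\cLL_v$ against $\eta^u$ using the perfect pairing of Section \ref{sec:dual}. The first expansion yields $\lrb{\Delta_w}{\eta^u} = A(w)\eta^u(w)$ directly from the definition (\ref{def_pairing}), while the second yields $\lrb{\Delta_w}{\eta^u} = q(u,w)$ by $Q(V)$-linearity of the pairing and the duality (\ref{duality}). Equating these two expressions gives (i). Part (ii) then follows at once from (i) together with the support property (\ref{eta}), since $\eta^u(w)=0$ whenever $u\not\le w$ (and $A(w)$ is a nonzero element of $Q(V)$, so it plays no role in the support).

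For part (iii), the plan is to compute $\Delta_{s_iw}=\Delta_i\Delta_w$, an identity that holds because $\ell(s_iw)=\ell(w)+1$ by the standing assumption. Substituting $\Delta_w=\sum_v q(v,w)\cLL_v$ and using the form $\Delta_i=(1+\alpha_i)\delta_{s_i}$, I would commute $\delta_{s_i}$ past the scalar $q(v,w)$ via the smash-product relation $\delta_{s_i}\cdot f = s_i(f)\delta_{s_i}$, producing a factor $s_i(q(v,w))$. Rewriting $\delta_{s_i}=(1+\alpha_i)^{-1}(\delta_{id}+\alpha_i\cLL_i)$ (which is just $\Delta_i/(1+\alpha_i)$ in the form $\Delta_i=1+\alpha_i\cLL_i$) converts the result back to the $\cLL$-basis:
\[
\Delta_{s_iw}=\sum_{v\in W} s_i(q(v,w))\bigl(\cLL_v+\alpha_i\cLL_{s_iv}\bigr).
\]
Reading off the coefficient of $\cLL_u$ — which receives contributions from $v=u$ (giving $s_i(q(u,w))$) and from $v=s_iu$ (giving $\alpha_i\,s_i(q(s_iu,w))$) — produces exactly the recursion claimed in (iii).

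The one genuinely non-trivial ingredient in this computation, and the main (though still minor) obstacle, is the identity $\cLL_i\cLL_v = \cLL_{s_iv}$ valid for every $v\in W$, which is needed to rewrite $\cLL_i\cLL_v$ above. When $s_iv>v$ this is immediate from Proposition \ref{prop:DL}(a) applied to a reduced word for $s_iv$ beginning with $s_i$. When $s_iv<v$ I would instead choose a reduced word for $v$ beginning with $s_i$, write $\cLL_v=\cLL_i\cLL_{s_iv}$ (again by (a)), and then conclude via the relation $\cLL_i^2=\delta_{id}$ recorded immediately after Definition \ref{def:L_w}. Once this uniform identity is in hand, the remainder of (iii) is bookkeeping of the smash-product moves and collection of coefficients.
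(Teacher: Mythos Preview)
Your proof is correct and follows essentially the same approach as the paper: the paper proves (i) by evaluating $\lrb{\Delta_w}{\eta^v}$ via (\ref{def_pairing}) and (\ref{duality}), deduces (ii) from (i) and (\ref{eta}), and proves (iii) by expanding $\Delta_{s_iw}=(1+\alpha_i)\delta_{s_i}\sum_u q(u,w)\cLL_u=\sum_u s_i(q(u,w))(1+\alpha_i\cLL_i)\cLL_u$ and comparing coefficients using $\cLL_i^2=\delta_{id}$. Your write-up is simply more explicit, in particular in justifying $\cLL_i\cLL_v=\cLL_{s_iv}$ in both Bruhat cases.
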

\begin{proof}
(i) follows by  evaluation of $\langle \Delta_w, \eta^v\rangle$, using definitions (\ref{def_pairing}) and  (\ref{duality}). 
(ii) follows from (i) and (\ref{eta}).
 If $s_i w>w$, by 
comparing the coefficient  of $\cLL_u$ in 

$\Delta_{s_i w}=\Delta_{s_i} \Delta_w=(1+\alpha_i )\delta_{s_i} \displaystyle
\sum_{u\in W} q(u,w) \cLL_u\\
=\sum_{u\in W}s_i(q(u,w)) (1+\alpha_i \cLL_i)\cLL_{u},$

\noindent
we get the relation (iii), as $(\cLL_i)^2=\delta_{id}$.

\end{proof}

We can now define a left $Q(V)$-module isomorphism $\Phi: H_Q\to Q(V)[W]$ by
sending
$$\Phi\left(\sum_{w\in W} c_w \cLL_w\right)=\sum_{w\in W} c_w w,\;\; c_w\in Q(V).$$

\begin{Dfn}
For $w\in W$, define a Yang--Baxter element $Y_w:=\Phi(\Delta_w)\in Q(V)[W]$. 
\end{Dfn}
Then by (\ref{Delta_to_cLL}), 
\begin{equation}\label{def_Y}
Y_w=\sum_{v\leq w} q(v,w) v.
\end{equation}

\begin{Pro}\label{prop:Shi}\cite{Shi}
For a reduced expression $w=s_{i_1}s_{i_2}\cdots s_{i_\ell}\in W$,
let $\beta_j$ be as in (\ref{beta_j}).
Then the following equality holds in
$Q(V)[W]$.
\begin{equation}\label{prod_Y}
Y_w=(1+\beta_{1} s_{i_1})(1+\beta_{2} s_{i_2})\cdots (1+\beta_{\ell} s_{i_\ell}).
\end{equation}

\end{Pro}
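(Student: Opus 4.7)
The plan is to prove (\ref{prod_Y}) by induction on $\ell(w)$, establishing the right-multiplication recursion $Y_{us_i} = Y_u\,(1 + u(\alpha_i)\, s_i)$ whenever $\ell(us_i) = \ell(u)+1$; iterating from the identity then yields the full product. The base case $\ell(w) = 0$ is trivial, and for $\ell(w)=1$ with $w=s_i$, the identity $(1+\alpha_i)\delta_{s_i} = \delta_{id} + \alpha_i \cLL_i$ noted just after Definition \ref{def:L_w} gives $\Delta_{s_i} = \delta_{id} + \alpha_i \cLL_i$, so $Y_{s_i} = \Phi(\Delta_{s_i}) = 1 + \alpha_i s_i$, as required (since $\beta_1=\alpha_{i_1}$).

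For the inductive step I compute $\Delta_w = \Delta_u\Delta_i$ directly in $H_Q$, where $w=us_i$ is reduced. Using $\Delta_i = 1 + \alpha_i \cLL_i$ together with the twisted product identity $\delta_u\cdot\alpha_i = u(\alpha_i)\delta_u$, the key intermediate identity $\Delta_u\cdot\alpha_i = \beta\,\Delta_u$ emerges, where $\beta := u(\alpha_i) = \beta_\ell$; hence
\begin{equation*}
\Delta_w = \Delta_u + \beta\,\Delta_u \cLL_i \quad \text{in } H_Q.
\end{equation*}
Expanding $\Delta_u = \sum_v q(v,u)\,\cLL_v$ as in (\ref{Delta_to_cLL}) and observing that $\cLL_v\cLL_i = \cLL_{vs_i}$ regardless of the relative length of $vs_i$ and $v$ (directly when $\ell(vs_i)=\ell(v)+1$, and by writing $v=(vs_i)s_i$ as a reduced expression together with $\cLL_i^2=\delta_{id}$ when $\ell(vs_i)=\ell(v)-1$, invoking Proposition \ref{prop:DL}(a)), I rearrange this to $\Delta_u \cLL_i = \sum_v q(v,u)\,\cLL_{vs_i}$.

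Applying the left $Q(V)$-module isomorphism $\Phi$ and using the centrality of $Q(V)$ in the group algebra $Q(V)[W]$, the identity becomes $Y_w = Y_u + \beta\,Y_u s_i = Y_u\,(1+\beta s_i)$, which is exactly the desired recursion. Combining this with the inductive hypothesis $Y_u = \prod_{j=1}^{\ell-1}(1+\beta_j s_{i_j})$ yields (\ref{prod_Y}). The main delicate point I expect to navigate is the asymmetry between the two algebras: in $H_Q$, a scalar from $V$ must be commuted past $\delta_u$ at the price of a $u$-twist, whereas in $Q(V)[W]$ scalars are central. Since $\Phi$ is a module but not a ring map, this discrepancy has to be resolved inside $H_Q$ \emph{before} passing to $Q(V)[W]$---which is precisely what the identity $\Delta_u\alpha_i = \beta\,\Delta_u$ accomplishes.
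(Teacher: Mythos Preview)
Your argument is correct. The induction via right multiplication works exactly as you describe: the identity $\Delta_u = A(u)\,\delta_u$ makes $\Delta_u\cdot\alpha_i = u(\alpha_i)\,\Delta_u$ immediate, and the observation $\cLL_v\cLL_i=\cLL_{vs_i}$ (in both length cases) lets you push $\Phi$ through to obtain $Y_{us_i}=Y_u(1+u(\alpha_i)s_i)$.

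The paper's proof is also an induction on $\ell(w)$ but proceeds from the \emph{left}: it invokes Lemma~\ref{prop_q}(iii), which comes from $\Delta_{s_iw}=\Delta_i\Delta_w=(1+\alpha_i)\delta_{s_i}\sum_v q(v,w)\cLL_v$ and yields the coefficient recursion $q(u,s_iw)=s_i(q(u,w))+\alpha_i\,s_i(q(s_iu,w))$. One then checks that this recursion is exactly what left-multiplying the product $(1+\alpha_i s_i)\cdot\bigl[\prod_j(1+s_i(\beta_j)s_{i_j})\bigr]$ produces in $Q(V)[W]$. Your right-multiplication route sidesteps this coefficient bookkeeping entirely by exploiting that $\Delta_u$ is a scalar multiple of a single $\delta_u$, so commuting $\alpha_i$ past it is trivial; the price is that you must separately verify $\cLL_v\cLL_i=\cLL_{vs_i}$ in the length-decreasing case. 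Both arguments are short, but yours is arguably more transparent because the twist appears exactly once (in $\delta_u\alpha_i=u(\alpha_i)\delta_u$) rather than being spread across a coefficient identity.
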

\begin{proof}
It follows directly by induction on $\ell(w)$ and Lemma \ref{prop_q_rec} (iii).
\end{proof}

\begin{Rem}\label{Shi_simplified}
The proof of the above Proposition gives a simple proof for \cite[Theorem 3.2]{Shi}, by replacing $\alpha_i$ with $-\alpha_i$ for all $i\in I$.
\end{Rem}

\begin{Cor}\label{sum=1}
 For $w\in W$, we have
\begin{equation}\label{A=sum_q}
A(w)=\sum_{v\leq w} q(v,w),
\end{equation}
\begin{equation}\label{sum_eta}
\sum_{v\leq w} \eta^v(w)=1,
\end{equation}
\begin{equation}\label{sum_eta^J}
\text{ if }w\in W^J, 
\sum_{v\in W^J, v\leq w} \eta^v_J(w)=1.
\end{equation}
\end{Cor}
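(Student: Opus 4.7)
The plan is to derive all three identities from Proposition \ref{prop:Shi}, together with Lemma \ref{prop_q}(i) and Definition \ref{para_J}.

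First I would exploit the factorized form
$$Y_w=(1+\beta_{1} s_{i_1})(1+\beta_{2} s_{i_2})\cdots (1+\beta_{\ell} s_{i_\ell})$$
from Proposition \ref{prop:Shi}. Let $\varepsilon\colon Q(V)[W]\to Q(V)$ be the augmentation, i.e.\ the $Q(V)$-linear map sending every $v\in W$ to $1$. Applying $\varepsilon$ to the expansion $Y_w=\sum_{v\leq w} q(v,w)\,v$ from (\ref{def_Y}) gives the left-hand side $\sum_{v\leq w} q(v,w)$, while applying $\varepsilon$ to the product above telescopes to $\prod_{j=1}^\ell(1+\beta_j)=A(w)$. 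This proves (\ref{A=sum_q}).

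Equation (\ref{sum_eta}) is then immediate from Lemma \ref{prop_q}(i): since $q(v,w)=A(w)\eta^v(w)$, dividing (\ref{A=sum_q}) by $A(w)$ yields $\sum_{v\leq w}\eta^v(w)=1$.

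For the parabolic statement (\ref{sum_eta^J}), I would unfold Definition \ref{para_J} and reindex. Using (\ref{def_J}),
\begin{equation*}
\sum_{v\in W^J,\ v\leq w}\eta^v_J(w)
=\sum_{v\in W^J,\ v\leq w}\ \sum_{u\in vW_J,\ u\leq w}\eta^u(w).
\end{equation*}
Every $u\in W$ factors uniquely as $u=vz$ with $v\in W^J$, $z\in W_J$, and $\ell(u)=\ell(v)+\ell(z)$, so $v$ is a subword of $u$ in any reduced expression and hence $v\leq u$. Consequently the hypothesis $u\leq w$ automatically forces $v\leq w$, so the double sum is simply $\sum_{u\leq w}\eta^u(w)$, which equals $1$ by (\ref{sum_eta}).

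The only subtle point is the reindexing in the parabolic step: one must observe that the minimal-length representative $v$ of the coset $uW_J$ always satisfies $v\leq w$ whenever $u\leq w$, so no terms are lost. Once this is in place the identity falls out of (\ref{sum_eta}). All other steps are formal consequences of the factorization of $Y_w$ and the duality between $\{\cLL_w\}$ and $\{\eta^w\}$.
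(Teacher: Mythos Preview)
Your argument is correct and essentially the same as the paper's: both apply the augmentation (the paper calls it $ev$) to the two expressions for $Y_w$, then divide by $A(w)$ using Lemma~\ref{prop_q}(i), and finally reindex the parabolic sum via Definition~\ref{para_J}. One small wording issue: you describe $\varepsilon$ only as a $Q(V)$-linear map, but to pass from the factored product to $\prod_j(1+\beta_j)$ you are using that $\varepsilon$ is a $Q(V)$-\emph{algebra} homomorphism (as the paper makes explicit); since the augmentation of a group algebra is indeed multiplicative, this is not a gap, just something worth stating.
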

\begin{proof}
Because of the Coxeter relations for the generators $s\in S$, there is a $Q(V)$-algebra homomorphism $ev: Q(V)[W]\to Q(V)$ defined by $ev(s)=1 \text{ for } \forall s\in S$. Hence,
$ev(Y_w)=\displaystyle\sum_{v\leq w} q(v,w)$ by equation (\ref{def_Y}),
and $ev(Y_w)=A(w)$ by equation (\ref{prod_Y}). Therefore we have equality (\ref{A=sum_q}).
Dividing both sides of equality (\ref{A=sum_q}) by $A(w)$, we get
the equality (\ref{sum_eta}) by Lemma \ref{prop_q} (i).
The equality (\ref{sum_eta^J}) follows from (\ref{sum_eta}) and the definition of 
$\eta^v_J(w)$ (\ref{def_J}).
\end{proof}

\section{Main Theorem}
In the same setup as in Section \ref{sec:dual}, for any $x,y\in W^J$, denote by
$x \overset{\beta}{\to} y$ if $yW_J=s_\beta xW_J$ and $x<y$. Then we have the following formula.
\begin{Thm}\label{thm:hookJ}(Hook formula for Coxeter group)
Let $\chi\in V$ be dominant with stabilizer subgroup $stab_{W}(\chi)=W_J$.
For any $w\in W^J$, the following equality holds.
\begin{equation}\label{hook_formula}
\begin{split}
\sum \frac{m_k}{m_1\beta_1+m_2\beta_2+\cdots +m_k\beta_k} \cdot \ldots \cdot 
\frac{m_1}{m_1\beta_1}\\
=\prod_{\beta \in S(w)}\left(1+\frac{1}{\beta}\right),
\end{split}
\end{equation}
where the sum is over all directed paths 
\begin{equation}\label{sequence}
\begin{split}
 x_k \overset{\beta_k}{\to} x_{k-1} \overset{\beta_{k-1}}{\to} \ldots \overset{\beta_1}{\to} x_0 = w\text{ in }W^J, \\
\end{split}
\end{equation} for any integer $k \ge 0$,
and $m_i: =( \chi, \gamma_i^\vee )$ for the unique
$\gamma_i\in R^{+}$ such that $x_{i-1}W_J=x_i s_{\gamma_i} W_J$\;\;$(1\leq i\leq k)$.

Taking the lowest degree terms in equation (\ref{hook_formula}), we get
\begin{equation}
\begin{split}
\sum \frac{m_k}{m_1\beta_1+m_2\beta_2+\cdots +m_k\beta_k} \cdot \ldots \cdot 
\frac{m_1}{m_1\beta_1}\\
=\prod_{\beta \in S(w)}\frac{1}{\beta},
\end{split}
\end{equation}
where the sum is over all directed sequences as in (\ref{sequence}) with
length $k=\ell(w)$.

\end{Thm}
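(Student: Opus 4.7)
The plan is to recognize both sides of (\ref{hook_formula}) in terms of the dual-basis values $\eta^v_J(w)$, then iterate the Molev--Sagan-type recursion (\ref{eq:recLR}). By (\ref{eta_J}), $\eta^w_J(w)=\prod_{\beta\in S(w)}\frac{\beta}{1+\beta}$, so
$$\prod_{\beta\in S(w)}\left(1+\tfrac{1}{\beta}\right)=\frac{1}{\eta^w_J(w)}.$$
Meanwhile, (\ref{sum_eta^J}) gives $\sum_{v\in W^J,\,v\le w}\eta^v_J(w)=1$. Dividing by $\eta^w_J(w)$ reduces the theorem to showing, for every $v\in W^J$ with $v\le w$,
\begin{equation}\label{plan:key}
\frac{\eta^v_J(w)}{\eta^w_J(w)}=\sum_{v=x_k\overset{\beta_k}{\to}\cdots\overset{\beta_1}{\to}x_0=w}\prod_{i=1}^{k}\frac{m_i}{m_1\beta_1+\cdots+m_i\beta_i},
\end{equation}
with the empty path ($v=w$, $k=0$) contributing $1$.

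To establish (\ref{plan:key}), I would induct on $\ell(w)-\ell(v)$; the base case $v=w$ is immediate. For $v<w$ in $W^J$, Lemma~\ref{lem:non-zero} gives $w\chi\neq v\chi$, so the recursion (\ref{eq:recLR}) applies, and using $d^{w,J}_{v,w}=\eta^v_J(w)$ (from (\ref{lr_d}) and (\ref{eta_J})) it reads
$$\eta^v_J(w)=\sum_{x\in W^J,\;v<x\le w}\frac{c^{x,J}_{\chi,v}}{w\chi-v\chi}\,\eta^x_J(w).$$
By Lemma~\ref{lem:coeffJ}, only $x$ with $xW_J=vs_\gamma W_J$ for some $\gamma\in R^+$ contribute, with $c^{x,J}_{\chi,v}=-(\chi,\gamma^\vee)$; equivalently $v\overset{\beta}{\to}x$ with $xW_J=s_\beta vW_J$ by Lemma~\ref{lem:Bruhat_order}. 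Telescoping Lemma~\ref{lem:Bruhat_order} along $v=x_k\overset{\beta_k}{\to}\cdots\overset{\beta_1}{\to}x_0=w$ yields $w\chi-x_i\chi=-\sum_{j=1}^{i}m_j\beta_j$, so the factor from the initial step $x_k\to x_{k-1}$ equals
$$\frac{-m_k}{w\chi-x_k\chi}=\frac{m_k}{m_1\beta_1+\cdots+m_k\beta_k}.$$
By the inductive hypothesis $\eta^{x_{k-1}}_J(w)/\eta^w_J(w)$ is the corresponding path sum from $x_{k-1}$ to $w$; concatenating produces (\ref{plan:key}).

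Summing (\ref{plan:key}) over $v\in W^J\cap[e,w]$ enumerates each directed path ending at $w$ exactly once, proving (\ref{hook_formula}). For the lowest-degree statement, I would grade $Q(V)$ by $\deg\chi=1$ for $\chi\in V$; each factor $m_i/(m_1\beta_1+\cdots+m_i\beta_i)$ is then homogeneous of degree $-1$, so a path of length $k$ contributes a term of degree $-k$. Expanding $\prod_{\beta\in S(w)}(1+1/\beta)$ yields terms of degrees $0,-1,\ldots,-\ell(w)$, with lowest piece $\prod_\beta 1/\beta$. Since arrows in $W^J$ strictly increase length, paths have length at most $\ell(w)$, with equality exactly on chains of Bruhat covers from $e$ to $w$. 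Equating lowest-degree parts yields the second equation.

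The main obstacle I expect is making the iteration well-founded: for every $v<w$ in $W^J$ one needs some $x\in(v,w]\cap W^J$ with $v\overset{\beta}{\to}x$. This is supplied by any Bruhat cover of $v$ inside $[v,w]\cap W^J$ (which is automatically an edge of the parabolic Bruhat graph by Lemma~\ref{lem:Bruhat_order}), so the recursion always moves strictly upward in length and eventually reaches $w$.
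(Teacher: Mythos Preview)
Your proof is correct and follows essentially the same route as the paper: iterate the Molev--Sagan recursion (\ref{eq:recLR}) with $d^{w,J}_{u,w}=\eta^u_J(w)$, sum over $u\le w$ using (\ref{sum_eta^J}), divide by $d^{w,J}_{w,w}$, and identify the coefficients via Lemmas~\ref{lem:coeffJ} and~\ref{lem:Bruhat_order} (your telescoping $w\chi-x_i\chi=-\sum_{j\le i}m_j\beta_j$ makes explicit what the paper leaves to these two lemmas). Your final worry about well-foundedness is unnecessary: if no $x$ with $v\overset{\beta}{\to}x\le w$ exists, then the recursion forces $\eta^v_J(w)=0$ and the path sum in (\ref{plan:key}) is empty, so both sides agree trivially.
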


\begin{proof} 
By Lemma \ref{lem:non-zero}, $c^{w,J}_{\chi,w} - c^{u,J}_{\chi,u} =w\chi-u\chi\neq 0$ for any $u<w\in W^J$.
Therefore we can apply the formula (\ref{eq:recLR}) recursively to get

\begin{equation}\label{sum_d}
d^{w,J}_{u,w}=\\
\sum
\left(\prod_{i=1}^{k}
\frac{c^{x_i,J}_{\chi,x_{i-1}} }{ c^{w,J}_{\chi,w} - c^{x_{i-1},J}_{\chi,x_{i-1}} }\; \right)
d^{w,J}_{w,w},
\end{equation}
where the summation is over all integers $k \ge 1$, and
sequences
$u=x_0<x_1<x_2<\cdots <x_k=w, x_i\in W^J$.
On the other hand, by (\ref{sum_eta^J}) and Lemma \ref{d^{w,J}_{u,v}},
$\sum_{u\leq w}d^{w,J}_{u,w}=1$ and we have equality
 $\displaystyle\sum_{u\in W^J, u\leq w} \frac{d^{w,J}_{u,w}}{d^{w,J}_{w,w}}
=\prod_{\beta\in S(w)} \frac{1+\beta}{\beta}$.
Then the theorem follows from Lemma \ref{lem:Bruhat_order} and Lemma \ref{lem:coeffJ}.

\end{proof}
An element $w\in W$ is said to be $\chi$-minuscule $(\chi\in V)$ if
$$(s_{i_{j+1}}s_{i_{j+2}}\cdots s_{i_\ell}(\chi), \alpha_{j}^\vee)=1
\;\; (j=1,\ldots \ell)$$
 for a reduced decomposition
$w=s_{i_1}\cdots s_{i_\ell}$.
\begin{Lem} (\cite[Lemma 3.5, Corollary 3.10]{MNS22})
In the setting of Theorem \ref{thm:hookJ},
if $u\in W$ is $\chi$-minuscule, then 
$u\in W^J$ and
$m_i=1$\;\;$(1\leq i\leq k)$
for
each directed path (\ref{sequence}).

\end{Lem}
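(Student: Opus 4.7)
The lemma consists of two claims: that $u \in W^J$, and that every $m_i$ equals $1$ along any directed path (\ref{sequence}) ending at $u$. My plan is to first extract from the $\chi$-minuscule condition a concrete telescoping formula for $u(\chi)$, and then to use it together with braid/subword invariance properties of the minuscule condition to deduce both claims. Fix a reduced expression $u = s_{i_1}\cdots s_{i_\ell}$ and put $\chi_j := s_{i_{j+1}}\cdots s_{i_\ell}(\chi)$, so that $\chi_\ell = \chi$ and $\chi_0 = u(\chi)$. The minuscule condition $(\chi_j,\alpha_{i_j}^\vee)=1$ combined with $\chi_{j-1}=s_{i_j}(\chi_j)=\chi_j-(\chi_j,\alpha_{i_j}^\vee)\alpha_{i_j}$ gives $\chi_{j-1}=\chi_j-\alpha_{i_j}$, and telescoping yields
\[
u(\chi)\;=\;\chi-\sum_{j=1}^{\ell}\alpha_{i_j}.
\]

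For the first claim, suppose for contradiction that $u\notin W^J$: there is $s_j\in W_J$ (equivalently $(\chi,\alpha_j^\vee)=0$) with $us_j<u$. The strong exchange condition produces a reduced decomposition of $u$ ending in $s_j$. The key input is that the $\chi$-minuscule condition does not depend on the chosen reduced decomposition. This can be verified by a rank-two reduction: under any braid move the minuscule condition restricts to a symmetric condition in the pair of simple reflections involved, which is preserved by the move. Applying the minuscule condition to a reduced decomposition ending in $s_j$, its last condition (at position $\ell$) reads $(\chi,\alpha_j^\vee)=1$, contradicting $(\chi,\alpha_j^\vee)=0$.

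For the second claim, the crucial additional input is the subword stability of the minuscule property: if $u$ is $\chi$-minuscule and $x\leq u$ in the Bruhat order, then $x$ is also $\chi$-minuscule. This follows from combining the subword property of the Bruhat order with the braid invariance above. Consequently, every $x_i$ in a directed path is $\chi$-minuscule, and the telescoping formula applies to each. For an adjacent pair $x_i>x_{i-1}$ with $x_{i-1}W_J=x_is_{\gamma_i}W_J$, the fact that $W_J$ fixes $\chi$ gives $x_is_{\gamma_i}(\chi)=x_{i-1}(\chi)$, which together with the conjugation identity $x_is_{\gamma_i}x_i^{-1}=s_{\beta_i}$, $\beta_i=x_i(\gamma_i)$, produces $s_{\beta_i}(x_i(\chi))=x_{i-1}(\chi)$. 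The reflection formula then yields $m_i=(x_i(\chi),\beta_i^\vee)$. Using the telescoping formulas for $x_i(\chi)$ and $x_{i-1}(\chi)$ as $\chi$-minuscule elements, one matches the coefficient of $m_i\beta_i=x_i(\chi)-x_{i-1}(\chi)$ in the simple-root expansions of $\chi-x_i(\chi)$ and $\chi-x_{i-1}(\chi)$ to force $m_i=1$.

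The main obstacle is the braid invariance and subword stability of the $\chi$-minuscule property. Both reduce to dihedral (rank-two) computations, but the verification must be handled carefully for arbitrary Coxeter groups, and is the content of the cited results from \cite{MNS22}.
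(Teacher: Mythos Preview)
The paper does not itself prove this lemma; it simply cites \cite[Lemma~3.5, Corollary~3.10]{MNS22}. Your sketch goes further by outlining the intended ingredients (the telescoping identity $u(\chi)=\chi-\sum_j\alpha_{i_j}$, braid invariance, and a downward-closure property for the minuscule condition), and your argument for $u\in W^J$ is sound once braid invariance is granted.

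There is, however, a genuine gap in the second part. Your ``subword stability'' claim, that every $x\leq u$ in Bruhat order is $\chi$-minuscule, is false as stated: in $W=S_3$ with $\chi=\omega_1$ (so $J=\{2\}$) the element $w=s_2s_1$ is $\chi$-minuscule, yet $s_2\leq w$ is not, since $(\chi,\alpha_2^\vee)=0$. The correct version restricts to $x\in W^J$, and it does \emph{not} follow merely from ``subword property plus braid invariance''; one needs further structural input (full commutativity of $\chi$-minuscule elements and the heap description of the interval), which is precisely what the cited results in \cite{MNS22} supply. Moreover, your concluding step---``matching coefficients'' of $m_i\beta_i=x_i(\chi)-x_{i-1}(\chi)$ against simple-root expansions---does not by itself force $m_i=1$: knowing only that both sides are integer combinations of simple roots gives no bound on $m_i$. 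What is actually needed is that each edge $x_i\to x_{i-1}$ in $W^J$ is realized as $x_{i-1}=s_jx_i$ for a \emph{simple} reflection $s_j$, after which the first minuscule condition for $x_{i-1}$ reads $(x_i(\chi),\alpha_j^\vee)=1$ and gives $m_i=1$ directly; this reduction to simple covers is again part of the minuscule structure theory, not a consequence of the telescoping formula. (Minor: your inequality is reversed---in the theorem $x_i<x_{i-1}$.)
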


\begin{Cor}\label{col:Nak}(Nakada's colored hook formula \cite[Theorem 7.1]{Nak})
 Let $W$ be the Weyl group of a Kac--Moody algebra,
with the set of simple reflections $S$, acting on the real Cartan subalgebra $\frak h_{\mathbb R}$.
Let $\lambda \in V=\frak h^{*}_{\mathbb R}$ be a dominant integral weight,
$stab_{W}(\lambda)=W_J$,
and $w\in W$ be a $\lambda$-minuscule element.
Then we have 
\begin{equation}\label{E:minusculeNak}
\sum \frac{1}{\beta_1+\beta_2+\cdots +\beta_k} \cdot \ldots \cdot 
\frac{1}{\beta_1}
=\prod_{\beta \in S(w)}\left(1+\frac{1}{\beta}\right),
\end{equation}
where the sum is over all integers $k \ge 0$ and directed paths (\ref{sequence}).
\end{Cor}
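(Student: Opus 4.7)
The plan is to obtain Corollary \ref{col:Nak} as an immediate specialization of Theorem \ref{thm:hookJ}, using the cited lemma about $\lambda$-minuscule elements as the bridge between the general Coxeter setup and the minuscule Kac--Moody setup. Since the Weyl group of a Kac--Moody algebra is a Coxeter group with a root datum coming from the real Cartan subalgebra $\fh_{\mathbb R}$, the hypotheses of Theorem \ref{thm:hookJ} apply with $\chi = \lambda$ dominant and $\mathrm{stab}_W(\lambda) = W_J$.

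First I would verify that $w$ lies in $W^J$: this is exactly one half of the content of the quoted lemma (\cite[Lemma 3.5, Corollary 3.10]{MNS22}), and it is needed so that $w$ is a valid index for the sum appearing on the left-hand side of Theorem \ref{thm:hookJ}. Next I would substitute $\chi = \lambda$ into the identity \eqref{hook_formula} of Theorem \ref{thm:hookJ}, obtaining
\begin{equation*}
\sum \frac{m_k}{m_1\beta_1 + \cdots + m_k\beta_k} \cdots \frac{m_1}{m_1 \beta_1} \;=\; \prod_{\beta \in S(w)}\left(1 + \frac{1}{\beta}\right),
\end{equation*}
where the sum ranges over all directed paths $x_k \overset{\beta_k}{\to} \cdots \overset{\beta_1}{\to} x_0 = w$ in $W^J$, and $m_i = (\lambda,\gamma_i^\vee)$ for the unique positive root $\gamma_i$ with $x_{i-1} W_J = x_i s_{\gamma_i} W_J$.

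The key simplification is then provided by the second half of the lemma: for a $\lambda$-minuscule $w$, every $m_i$ appearing in any such directed path equals $1$. Granting this, each factor $m_i/(m_1\beta_1 + \cdots + m_i\beta_i)$ simplifies to $1/(\beta_1 + \cdots + \beta_i)$, and the identity becomes exactly \eqref{E:minusculeNak}. The only nontrivial input beyond Theorem \ref{thm:hookJ} is precisely the minuscule lemma, which is cited, so there is essentially no obstacle; the main thing to be careful about is bookkeeping: confirming that the directed paths in Theorem \ref{thm:hookJ} (defined via $W^J$-cosets and reflections $s_\gamma$) match those in the statement of Corollary \ref{col:Nak}, and that the index convention for the $\beta_i$ and $m_i$ lines up correctly in both formulas. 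Once the matching is checked, the corollary drops out in one line.
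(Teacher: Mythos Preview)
Your proposal is correct and follows exactly the paper's approach: the corollary is an immediate specialization of Theorem \ref{thm:hookJ} with $\chi=\lambda$, once the cited lemma guarantees $w\in W^J$ and that every $m_i=1$ along each directed path. The paper does not even spell out a separate proof, so your write-up matches the intended argument.
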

\begin{Rem}
(i) Nakada's original formula is written in terms of pre-dominant integral weights.
The equivalence of the formulation above is explained in \cite[Section  3.2]{MNS22}.

(ii) The terminology `hook'  arises from the  Grassmannian situation, 
where we may consider each $\beta\in S(w)$ as a hook in the partition for $w$.

(iii) There is a $K$-theoretic analog of Theorem. 5.1,  which will be considered elsewhere.
\end{Rem}

\section{Appendix: Geometric interpretation}\label{sec:app}
Here we comment briefly on the geometric interpretation of our construction
for the finite Weyl group case.
In \cite{MNS22}, we noticed that for a finite Weyl group $W$,
Nakada's colored hook formula can be derived using geometric arguments
via the Chern--Schwartz--MacPherson (CSM) classes of the Schubert cells.
 Let $G$ be a reductive algebraic group with Borel $B$, maximal torus $T$, and Weyl group $W$. Let $X:=G/B$ be the full flag variety, and
$H^T_{*}(X)$, $H^{*}_T(X)$ denote the $T$-equivariant homology
and $T$-equivariant cohomology, respectively.
For any $w\in W$, there are
Schubert cells $X(w)^\circ=BwB/B$, $Y(w)^\circ=B^{-} w B/B$,
and their closures $X(w)=\overline{X(w)^\circ}$, $Y(w)=\overline{Y(w)^\circ}$ inside $X$.
We refer the readers to \cite{MNS20} and  \cite{MNS22} for unexplained terminology below.

There is a left Weyl group action on $H_*^T(G/B)$. For any $w\in W$, let us denote its action by $\delta_w$. Then we have the following correspondence:
\begin{tabular}{lll}
$\partial_w$ & Schubert class $[X(w)]\in H^T_{*}(X)$,\\[0.2cm]
$\cLL_w$ & \begin{minipage}{6cm}
CSM class of the Schubert cell $c_{SM}(X(w)^\circ)\in H^T_{*}(X),$
\end{minipage}\\[0.5cm]
$\eta^v$ & \begin{minipage}{6cm}
Segre–MacPherson class of the opposite Schubert cell  $s_{M}(Y(v)^\circ)\in H^{*}_T(X)_{loc}$,
\end{minipage}\\[0.5cm]
$\eta^v(w)$& the localization $s_{M}(Y(v)^\circ)|_w$.
\end{tabular}

To be more precise, the operator $\cLL_w$ becomes the left Demazure--Lusztig operator, denoted by
$\mathcal T_w^L$ in \cite{MNS20}, and it is shown in Theorem 4.4 of \textit{loc. cit.} that
\begin{equation}
\mathcal T_w^L (c_{SM}(X(id)^\circ))=c_{SM}(X(w)^\circ).
\end{equation}
The other identifications can be proved similarly.

{\bf Acknowledgements.} {The authors thank the referee for valuable comments.
L.~C.~Mihalcea was partially supported by NSF grant DMS-2152294 
and a Simons Collaboration grant; H.~Naruse was supported in part by JSPS KAKENHI Grant Number 16H03921.
}

\bibliographystyle{halpha}

\end{document}